\title{Quantum Hamiltonian reductions for $\W$-algebras}
\author[Justine Fasquel]{Justine Fasquel}
\address[J.F.]{Université Bourgogne Europe, CNRS, IMB UMR 5584, 21000 Dijon, France}
\email{justine.fasquel@u-bourgogne.fr}
\author[Shigenori Nakatsuka]{Shigenori Nakatsuka}
\address[S.N.]{Department Mathematik, FAU Erlangen–Nürnberg, Cauerstraße 11, 91058, Erlangen, Germany}
\email{shigenori.nakatsuka@fau.de}
\definecolor{rouge}{rgb}{0.85,0.1,.4}
\definecolor{bleu}{rgb}{0.1,0.2,0.9}
\definecolor{violet}{rgb}{0.7,0,0.8}
\newcommand\doi[2]{\href{http://dx.doi.org/#1}{#2}}
\tikzset{>=latex} \usetikzlibrary{backgrounds} \usetikzlibrary{shapes.geometric}
\newtheorem{definition}{Definition}[section]
\newtheorem{proposition}[definition]{Proposition}
\newtheorem{theorem}[definition]{Theorem}
\newtheorem{corollary}[definition]{Corollary}
\newtheorem{lemma}[definition]{Lemma}
\theoremstyle{remark}
\newtheorem{remark}[definition]{Remark}
\numberwithin{equation}{section}
\newcommand{\Z}{\mathbb{Z}}     
\newcommand{\Q}{\mathbb{Q}}     
\newcommand{\C}{\mathbb{C}}     
\newcommand{\D}{\mathcal{D}}
\newcommand{\W}{\mathcal{W}}    
\newcommand{\V}{\mathcal{V}}    
\newcommand{\HH}{\mathrm{H}}    
\newcommand{\OO}{\mathbb{O}}    
\newcommand{\tW}{\texorpdfstring{$\W$}{W}}
\newcommand{\Lie}{\mathrm{Lie}}
\newcommand{\BF}[1]{\mathbf{#1}}
\newcommand{\chfermion}{\mathcal{F}_\mathrm{ch}}
\newcommand{\End}{\operatorname{End}}
\newcommand{\Hom}{\operatorname{Hom}}
\renewcommand{\ker}{\operatorname{Ker}}
\newcommand{\im}{\operatorname{Im}}
\newcommand{\Span}{\operatorname{Span}}
\renewcommand{\End}{\mathrm{End}}
\newcommand{\kk}{\mathsf{k}}   
\newcommand{\ee}{\mathsf{e}}  
\newcommand{\wun}{\mathbbm{1}}   
\newcommand{\dd}{\mathrm{d}}
\newcommand{\hv}{\mathsf{h}^\vee}   
\newcommand{\g}{\mathfrak{g}}   
\newcommand{\h}{\mathfrak{h}}   
\newcommand{\n}{\mathfrak{n}}
\newcommand{\np}{\mathfrak{n}}
\newcommand{\Np}{N}
\newcommand{\sll}{\mathfrak{sl}}    
\newcommand{\E}{\mathrm{E}}
\newcommand{\SL}{\mathrm{SL}}       
\newcommand{\lbr}[2]{[{\hspace{1mm}}#1 {\hspace{1mm}} {}_\lambda {\hspace{1mm}} #2 {\hspace{1mm}}]}
\newcommand{\CT}{\mathrm{CT}}
\newcommand{\affWak}[1]{\mathbb{W}^{\kk}_{{#1}}}
\newcommand{\fockIO}[2][\alpha]{\mathrm{e}^{-\frac{1}{\kk+\hv}{#1}_{#2}}}
\newcommand{\dz}{\mathrm{d}z}
\newcommand{\oo}[1]{\OO_{[#1]}}
\newcommand{\ad}{\mathrm{ad}}
\newcommand{\heis}[2][\h]{\pi_{#1}^{#2}}
\newcommand{\Fock}[2]{\pi_{\h,#1}^{#2}}
\newcommand{\bg}[1]{\mathcal{A}_{#1}}
\newcommand{\no}[1]{\mathopen{:} #1 \mathclose{:}}  
\newcommand{\diag}[1]{\mathrm{diag}(#1)}
\renewcommand{\epsilon}{\varepsilon}
\newcommand{\KL}{\mathrm{KL}}
\newcommand{\weyl}[1]{\mathbb{V}_{#1}}
\renewcommand{\mod}{\operatorname{-Mod}}
\newcolumntype{M}[1]{>{\centering\arraybackslash}m{#1}}
\newcommand{\numtableaux}[1]{
\ytableausetup{boxsize=1.15em,aligntableaux=bottom}
\begin{ytableau}
    #1
\end{ytableau}
\ytableausetup{boxsize=0.4em}}
\newcommand{\ie}{\textit{ie.}}
\newcommand{\eg}{\textit{eg.}}
\begin{document}

\begin{abstract}
In this paper, we establish a general criterion for good pairs, namely pairs consisting of a nilpotent orbit and an even good grading in a simple Lie algebra, which guarantees the existence of a quantum Hamiltonian reduction between affine $\W$-algebras. In particular, we show that for type $A$, any two affine $\W$-algebras associated with adjacent nilpotent orbits are related by quantum Hamiltonian reductions.
\end{abstract}
\maketitle

\section{Introduction}
From a finite-dimensional simple Lie algebra $\g$ over the complex numbers $\C$, one can construct a vertex algebra $\V^\kk(\g)$, that depends on a complex parameter $\kk$. This vertex algebra is called the affine vertex algebra and plays an analogous role to the enveloping algebra of the Kac-Moody algebra $\widehat{\g}$. More precisely, its representation theory captures the smooth representations over the affine Lie algebra $\widehat{\g}$ at level $\kk$.

The affine vertex algebra $\V^\kk(\g)$ serves as the foundational object from which we can construct a large class of vertex algebras called the $\W$-algebras.
The $\W$-algebra $\W^\kk(\g,\OO)$ at level $\kk$ is defined \cite{FF90, KRW03} by applying to $\V^\kk(\g)$ a quantum Hamiltonian reduction corresponding to the nilpotent orbit $\OO\subset\g$:
\begin{align}\label{eq: def of Walg in intro}
    \W^\kk(\g,\OO)=\HH^0_{\OO}(\V^\kk(\g)).
\end{align}
This reduction uses the data of good grading $\Gamma$ for a representative nilpotent element $f\in\OO$. 
Nevertheless, it is known that the resulting $\W$-algebra depends only on the nilpotent orbit $\OO$.

$\W$-algebras are generally extensions of the Virasoro vertex algebra, \ie\ the symmetry algebra governing two-dimensional conformal symmetry in physics. As such, they play a central role in the study of conformal field theories and in many areas of mathematics connected to them. Originally arising in the context of integrable systems \cite{DSJKV, DS85, FF90}, $\W$-algebras play a fundamental role in the representation theory of affine Kac–Moody algebras \cite{Ad19, FRR, KW04}, the geometric Langlands program \cite{AF19, Fre07, Gai16}, and more recently the study of our-dimensional supersymmetric gauge theories via 4d/2d duality \cite{Ara18, BLL15, BPR15, SXY17} and the AGT correspondence \cite{BFN16, AGT10, SV13}.

Although each $\W$-algebra $\W^\kk(\g,\OO)$ is associated with a nilpotent orbit $\OO$ and is constructed independently as the quantum Hamiltonian reductions \eqref{eq: def of Walg in intro}, it has been noticed recently that they are indeed related in a way reflecting the closure relations of the nilpotent orbits with respect to the Zariski topology \cite{BN25, CFLN, FFFN, FKN24, GJ25}. 
More precisely, given two nilpotent orbits $\OO_1, \OO_2$ under the closure relation ${\OO}_1\subset \overline{\OO}_2$, the existence of a (partial or generalised) quantum Hamiltonian reduction $\HH_{\OO_2 \uparrow {\OO}_1}$, so that 
\begin{align}\label{statement of reduction by stages}
    \HH^n_{\OO_2 \uparrow {\OO}_1}\left(\W^\kk(\g,\OO_1) \right)\simeq \delta_{n,0} \W^\kk(\g,\OO_2)
\end{align}
holds, is expected.
Equation \eqref{statement of reduction by stages} can be regarded as a vertex-algebraic counterpart of similar known results for Whittaker models for the finite-dimensional Lie algebras (\ie\ finite $\W$-algebras) \cite{GJ23, Morgan14} or for the reductive algebraic groups over local fields \cite{GGS17}. 

The finite $\W$-algebras $\mathscr{U}(\g,\OO)$ \cite{Kostant78, Pre02} are associative algebras obtained from the enveloping algebras $\mathscr{U}(\g)$ by the quantum Hamiltonian reductions in parallel to the vertex-algebraic case {and thus can be viewed as a finite analogue of $\W^\kk(\g,\OO)$}. Indeed, $\mathscr{U}(\g,\OO)$ is also the Zhu algebra of $\W^\kk(\g,\OO)$ \cite{Ara07, DK06}.  
In this finite setting, partial reductions of the form \eqref{statement of reduction by stages} are better understood.
Indeed, Losev's decomposition theorem \cite{Losev10} asserts that the $\hbar$-adic version of $\mathscr{U}(\g)$, say $\mathscr{U}_\hbar(\g)$, is isomorphic to the tensor product of $\mathscr{U}_\hbar(\g,\OO)$ and a Weyl algebra $\mathcal{A}_{{\OO}\, \hbar}$ after appropriate completions;
hence a similar statement is naturally expected for $\mathscr{U}_\hbar(\g,\OO_1)$ with the tensor product of $\mathscr{U}_\hbar(\g,\OO_2)$ and some Weyl algebra, namely,
\begin{align}\label{statement of finite reduction by stages}
    \mathscr{U}_\hbar(\g,\OO_1)^{\wedge} \simeq \mathscr{U}_\hbar(\g,\OO_2)^{\wedge}\  \widehat{\otimes}_{\C[\![\hbar]\!]} \ \mathcal{A}_{\OO_2 \downarrow {\OO}_1\, \hbar}{}^{\wedge}.
\end{align}
Taking the quantum Hamiltonian reduction yields a finite analogue of \eqref{statement of reduction by stages}.

\subsection*{Main results}
In this paper, we consider the $\W$-algebras $\W^\kk(\g,\OO)$ obtained from representatives $f\in\OO$ which admit good even gradings (\ie\ $\Z$-gradings) $\Gamma$ on $\g$. The gradings are taken to be compatible with the standard triangular decomposition $\g=\n^+\oplus \h \oplus \n^-$. For simplicity, we denote $\np:=\n^+$ in the following.
Let $(f_i,\Gamma_i)$ ($i=1,2$) be two such pairs satisfying the following conditions:
\begin{itemize}
    \item Grading condition: $\np=\np_{0,0}\oplus \np_{0,1} \oplus \np_{1,0} \oplus \np^+_{+}$ with respect to the bi-grading $(\Gamma_1,\Gamma_2)$ where $\n^+_{+}=\bigoplus_{i,j>0}\n^+_{i,j}$;
    \item Nilpotent condition: $f_\circ:=f_2-f_1$ lies in $\n^{-}_{0,-1}$.
\end{itemize}
Then, one can construct a BRST complex $C_{\OO_2 \uparrow \OO_1}^\bullet(\W^\kk(\g,\OO_1))$ and take the associated quantum Hamiltonian reduction, \ie\ the cohomology  $\HH_{\OO_2 \uparrow \OO_1}^\bullet(\W^\kk(\g,\OO_1))$.
The main result is the following. 
\begin{theorem}\label{intro: Main thm: PQHR}
    For generic levels $\kk$, there is an isomorphism of vertex algebras 
    \begin{equation*}
        \HH^0_{\OO_2 \uparrow \OO_1}(\W^\kk(\g,\OO_1))\simeq \W^\kk(\g,\OO_2).
    \end{equation*}
\end{theorem}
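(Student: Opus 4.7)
The plan is to realise $\HH^\bullet_{\OO_2 \uparrow \OO_1}(\W^\kk(\g,\OO_1))$ as the cohomology of a bicomplex built on $\V^\kk(\g)$, and then to identify this bicomplex with the standard BRST complex computing $\W^\kk(\g,\OO_2)$. The theorem will follow by comparing two descriptions of the total cohomology of this bicomplex: one via the spectral sequence that first computes $d_1$-cohomology, and one via the total differential directly.

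The bicomplex is obtained by stacking two Kac-Roan-Wakimoto reductions. Let $(C_1^\bullet, d_1)$ be the BRST complex for $(f_1,\Gamma_1)$ on $\V^\kk(\g)$; by the Frenkel-Kac-Wakimoto vanishing theorem its cohomology is $\W^\kk(\g,\OO_1)$ concentrated in degree $0$ at generic $\kk$. On top of $C_1^\bullet$ one tensors the additional ghost system attached to $\np_{0,1} \oplus \np^+_+$ and introduces the partial-reduction differential $d_{2|1}$ carrying the charge built from $f_\circ$. The grading condition $\np = \np_{0,0} \oplus \np_{0,1} \oplus \np_{1,0} \oplus \np^+_+$ is precisely what ensures that $d_1$ and $d_{2|1}$ anticommute, and that the combined ghost content reorganises into the full $\np$-ghost system attached to $\Gamma_2$; the nilpotent condition $f_\circ \in \n^{-}_{0,-1}$ is precisely what ensures that the sum of the charges built from $f_1$ and from $f_\circ$ equals the single charge built from $f_2 = f_1 + f_\circ$. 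Consequently, after this reorganisation the total differential $d_{\text{tot}} = d_1 + d_{2|1}$ coincides with the Kac-Roan-Wakimoto differential $d_{\OO_2}$ for $(f_2,\Gamma_2)$ on $\V^\kk(\g)$.

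On the spectral-sequence side, I would run the first spectral sequence of the bicomplex, taking $d_1$-cohomology first. By FKW vanishing, this collapses the $E_1$-page to $\W^\kk(\g,\OO_1)$ tensored with the remaining ghost spaces, placed in a single horizontal row. The $E_2$-page is therefore $\HH^p_{\OO_2 \uparrow \OO_1}(\W^\kk(\g,\OO_1))$ concentrated in that single row, so the spectral sequence degenerates automatically and abuts to $\HH^\bullet_{\OO_2 \uparrow \OO_1}(\W^\kk(\g,\OO_1))$. By the identification of the previous paragraph, the same total cohomology equals $\HH^\bullet(d_{\OO_2})$ on $\V^\kk(\g)$, which by FKW applied to $(f_2,\Gamma_2)$ is $\W^\kk(\g,\OO_2)$ concentrated in degree $0$. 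Comparing the two descriptions yields both the isomorphism of the theorem in degree $0$ and the vanishing of the higher cohomology as a by-product.

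The main technical obstacle is the identification of $d_1 + d_{2|1}$ with $d_{\OO_2}$: this requires a careful bookkeeping of the ghost systems (their conformal weights, parities, and brackets) and of the Heisenberg/energy-momentum corrections that enter the definition of the BRST charge, all organised according to the bi-grading $(\Gamma_1,\Gamma_2)$ and using the fact that $\np^+_+$ contributes both to the $\Gamma_1$- and $\Gamma_2$-reductions in a compatible way. A secondary obstacle is the convergence of the spectral sequence, for which one must exhibit a grading on the bicomplex with finite-dimensional pieces, for instance by conformal weight together with a suitable charge grading, and verify that the filtration has bounded length on each piece; this is where the genericity of $\kk$ enters, in parallel to the role it plays in the original FKW vanishing theorem.
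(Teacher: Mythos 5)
There is a genuine gap at the heart of your argument: the claim that, after tensoring the $(f_1,\Gamma_1)$-BRST complex with the extra ghosts, ``the combined ghost content reorganises into the full $\np$-ghost system attached to $\Gamma_2$'' and hence $d_1+d_{2|1}=d_{\OO_2}$, is false in the generality of the theorem. The two reductions are built on nilpotent subalgebras that do \emph{not} nest: the $\Gamma_1$-reduction uses ghosts for $\Delta_{1,0}\sqcup\Delta^+_+$, the $\Gamma_2$-reduction uses ghosts for $\Delta_{0,1}\sqcup\Delta^+_+$, and the partial reduction $\HH_{\OO_2\uparrow\OO_1}$ only adjoins ghosts for the subspace $\np_{0,1}^{f_1}\subset\np_{0,1}$. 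Your total complex therefore carries superfluous fermionic ghosts for $\np_{1,0}$ (which has $\Gamma_2$-degree $0$ and plays no role in $C^\bullet_{f_2}$) and is missing the ghosts for the complement $\np_{0,1}^{\omega}$; these two spaces have the same dimension but there is no identification of the corresponding ghost systems at the level of $C^\bullet_{f}(\V^\kk(\g))$, so the total differential cannot equal the Kac--Roan--Wakimoto differential for $(f_2,\Gamma_2)$ and the spectral-sequence comparison has nothing to converge to. Your strategy is essentially the reduction-by-stages bicomplex of the earlier literature, which the paper explicitly notes corresponds to the special case $\np_{1,0}=0$; the whole point of condition $(\bigstar)$ here is to allow $\np_{1,0}\neq0$.

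The paper resolves exactly this mismatch by working throughout in the Wakimoto free-field realisation rather than on $\V^\kk(\g)$. There, $\W^\kk(\g,\OO_i)$ is the joint kernel of screening operators inside $\bg{\Delta_0^{\OO_i}}\otimes\heis{\kk+\hv}$, the currents of $\np_{0,1}^{f_1}$ become the free bosons $\beta_i$, so the partial BRST differential is $\sum(\beta_i+(f_\circ,u_i))\varphi_i^*$ and the Poincar\'e lemma gives acyclicity of the Wakimoto modules; the leftover discrepancy between the $\beta\gamma$-systems attached to $\np_{0,1}^{\omega}$ and to $\np_{1,0}$ is then removed by a chiral Fourier transform $\beta\mapsto-\widehat{\gamma}$, $\gamma\mapsto\widehat{\beta}$, which exists precisely because of the non-degenerate pairing $\omega$ of Lemma~\ref{consequences of the condition}. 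The proof is completed by matching the induced screening operators $[P_i^{\OO_1}]$ with $P_i^{\OO_2}$ up to sign. If you want to salvage a complex-level argument on $\V^\kk(\g)$, you would at minimum have to build this bosonic Fourier transform (or an equivalent Koszul-type mediation between the $\np_{1,0}$- and $\np_{0,1}^{\omega}$-ghosts) into your bicomplex, and it is not clear how to do so without passing to free fields.
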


Moreover, one can also assign a BRST complex $C_{\OO_2 \uparrow \OO_1}^\bullet(M)$ to any $\W^\kk(\g,\OO_1)$-modules $M$ in a similar way. Thus, we obtain a functor 
$$\HH^0_{\OO_2 \uparrow \OO_1}\colon \W^\kk(\g,\OO_1)\mod \rightarrow \W^\kk(\g,\OO_2)\mod.$$
The isomorphism in Theorem \ref{intro: Main thm: PQHR} is generalized to modules obtained from the Kazhdan--Lusztig category $\KL^\kk(\g)$ of the affine vertex algebra $\V^\kk(\g)$:
$$\HH_{\OO}^0\colon \KL^\kk(\g) \rightarrow \W^\kk(\g,\OO)\mod.$$

\begin{corollary}\label{intro: Main thm: PQHR for modules}
    For generic levels $\kk$ and $\V^\kk(\g)$-modules $M$ in $\KL^\kk(\g)$, there is an isomorphism of $\W^\kk(\g,\OO_2)$-modules $\HH^0_{\OO_2 \uparrow \OO_1}\circ\HH_{\OO_1}^0(M) \simeq \HH_{\OO_2}^0(M)$. Hence, the following functors are naturally isomorphic
    \begin{equation}\label{intro, eq: equivalence}
        \HH^0_{\OO_2}, \HH^0_{\OO_2 \uparrow \OO_1}\circ\HH_{\OO_1}^0\colon \KL^\kk(\g) \rightarrow \W^\kk(\g,\OO_2)\mod.
    \end{equation}
\end{corollary}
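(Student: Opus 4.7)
The plan is to derive the module version from Theorem \ref{intro: Main thm: PQHR} via a double-complex argument that upgrades the vertex-algebra isomorphism to a natural isomorphism of functors out of $\KL^\kk(\g)$. For any $M\in \KL^\kk(\g)$ the first step is to form the total BRST complex $C^\bullet_{\OO_2}(M)$ attached to $f_2=f_1+f_\circ$ and to split it as a double complex $C^{\bullet,\bullet}(M)$, whose horizontal differential $d_1$ is built from $f_1$ together with the grading $\Gamma_1$ and whose vertical differential $d_\circ$ is built from $f_\circ \in \n^-_{0,-1}$. The grading condition $\np=\np_{0,0}\oplus \np_{0,1}\oplus \np_{1,0}\oplus \np^+_+$ lets the ghost/Clifford sector factorise compatibly with the bi-grading $(\Gamma_1,\Gamma_2)$, and the nilpotent condition $f_\circ\in \n^-_{0,-1}$ ensures both that $d_1$ and $d_\circ$ anticommute and that $d_1+d_\circ$ can be identified with the $\OO_2$-BRST differential. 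This is the same bookkeeping that underlies Theorem \ref{intro: Main thm: PQHR}, now applied to an arbitrary module.

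Next I run the spectral sequence associated with the horizontal filtration. Its $E_1$-page is the row-wise $d_1$-cohomology, which by construction is, up to the remaining ghost sector carrying $d_\circ$, the $\OO_1$-BRST cohomology of $M$. Arakawa's vanishing theorem for modules in the Kazhdan--Lusztig category gives $\HH^n_{\OO_1}(M)=0$ for $n\neq 0$, so the $E_1$-page concentrates on a single row that reads $C^\bullet_{\OO_2\uparrow\OO_1}\bigl(\HH^0_{\OO_1}(M)\bigr)$, and the induced differential matches, by construction, the one defining $\HH^\bullet_{\OO_2\uparrow\OO_1}$. The spectral sequence therefore degenerates at $E_2$, giving $\HH^\bullet_{\OO_2\uparrow\OO_1}\circ \HH^0_{\OO_1}(M)$. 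On the other hand, since $d_1+d_\circ$ is the $\OO_2$-BRST differential, the abutment is $\HH^\bullet_{\OO_2}(M)$, which Arakawa's theorem again forces to live in degree $0$. Comparing the two descriptions yields the desired isomorphism together with the vanishing of $\HH^n_{\OO_2\uparrow\OO_1}\bigl(\HH^0_{\OO_1}(M)\bigr)$ for $n\neq 0$. All constructions are manifestly functorial in $M$, which promotes the isomorphism to the natural isomorphism of functors in \eqref{intro, eq: equivalence}; the case $M=\V^\kk(\g)$ recovers Theorem \ref{intro: Main thm: PQHR} and provides the base compatibility check.

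The main obstacle I anticipate is the convergence of the spectral sequence. The double complex is infinite-dimensional and multiply graded, so one must exhibit an exhaustive and complete bounded filtration along which each differential has finite range. At generic levels every $\W$-algebra involved carries a compatible conformal vector, and filtering by conformal weight combined with ghost number typically renders each weight space of the double complex finite-dimensional, so standard Cartan--Eilenberg convergence applies and identifies the two filtrations. Once this technical point is handled, the collapsing and comparison are essentially formal, and what remains is routine tracking of natural transformations between the BRST functors defining $\HH^0_{\OO_1}$, $\HH^0_{\OO_2}$ and $\HH^0_{\OO_2 \uparrow \OO_1}$.
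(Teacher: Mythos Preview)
Your double-complex strategy has a genuine gap: the ghost sectors for the two BRST complexes do not match, so $C^\bullet_{\OO_2}(M)$ cannot be split with horizontal differential equal to the $\OO_1$-BRST differential. Under condition ($\bigstar$), the $\Gamma_2$-positive roots are $\Delta_{0,1}\sqcup \Delta^+_+$ while the $\Gamma_1$-positive roots are $\Delta_{1,0}\sqcup \Delta^+_+$; these differ exactly by swapping $\Delta_{0,1}$ with $\Delta_{1,0}$. The complex $C^\bullet_{\OO_2}(M)$ contains ghosts $\varphi_\alpha,\varphi_\alpha^*$ for $\alpha\in\Delta_{0,1}$ but \emph{none} for $\alpha\in\Delta_{1,0}$, so the $\OO_1$-differential $d_1$ simply cannot be written inside it. If instead you split only the character part, writing $f_2=f_1+f_\circ$ inside the $\Gamma_2$-complex, the piece $d_{\mathrm{st}}^{(2)}+d_\chi^{f_1}$ is the BRST differential for the pair $(f_1,\Gamma_2)$, which is \emph{not} good: $\np_{0,1}^{f_1}\neq 0$ sits in $\ker\ad(f_1)\cap\g_+^{(\Gamma_2)}$, so its cohomology is neither $\W^\kk(\g,\OO_1)$ nor known to be concentrated in degree $0$. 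Either way the identification of the $E_1$-page with $C^\bullet_{\OO_2\uparrow\OO_1}(\HH^0_{\OO_1}(M))$ fails. (The reduction-by-stages literature where such a spectral sequence does run, e.g.\ \cite{GJ25}, corresponds precisely to the special case $\np_{1,0}=0$; see the paper's remark after Lemma \ref{consequences of the condition}.)

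The paper's proof is genuinely different and deals with this mismatch head-on. It works through the Wakimoto resolution \eqref{eq: Wakimoto resolution of Weyl modules} of each Weyl module $\weyl{\lambda}^\kk$ at generic level, applies $\HH^0_{\OO_1}$ termwise to obtain a resolution by modules $\affWak{\OO_1,\mu}=\bg{\Delta_{0,0}\sqcup\Delta_{0,1}}\otimes\Fock{\mu}{\kk+\hv}$, and then applies $\HH^0_{\OO_2\uparrow\OO_1}$. The key step is a \emph{chiral Fourier transform} \eqref{eq: chiral Fourier transform} on the $\beta\gamma$-system that swaps the factor $\bg{\np_{0,1}^\omega}$ for $\bg{\np_{1,0}}$; this is exactly what bridges the $\Delta_{0,1}$/$\Delta_{1,0}$ discrepancy and identifies the induced screening operators $[S_{i,\lambda}^{\OO_1}]$ with $S_{i,\lambda}^{\OO_2}$ up to sign. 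The argument then concludes by semisimplicity of $\KL^\kk(\g)$ at generic level. No spectral sequence for a double BRST complex is used.
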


Finally, we show that, in type $A$, where we can always find even good gradings, the two conditions mentioned above are satisfied for arbitrary adjacent pairs of nilpotent orbits $\OO_1$ and $\OO_2$ in closure relation ${\OO}_1\varsubsetneq\overline{\OO}_2$ (denoted $\OO_1<\OO_2$). 
\begin{theorem}\label{intro: type A general adjacent}
    For generic levels $\kk$ and for arbitrary adjacent nilpotent orbits ${\OO_1<\OO_2}$ in $\sll_N$, there exists a quantum Hamiltonian reduction $\HH_{\OO_2\uparrow \OO_1}^0$ such that 
    \begin{align*}
        \W^\kk(\sll_N,\OO_2)\simeq \HH_{\OO_2\uparrow \OO_1}^0(\W^\kk(\sll_N,\OO_1)).
    \end{align*}
     Moreover, \eqref{intro, eq: equivalence} holds.
\end{theorem}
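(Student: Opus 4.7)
The plan is to reduce directly to Theorem \ref{intro: Main thm: PQHR} and Corollary \ref{intro: Main thm: PQHR for modules}: for each adjacent pair $\OO_1 < \OO_2$ in $\sll_N$ we must exhibit representatives $f_1 \in \OO_1$, $f_2 \in \OO_2$ and even good gradings $\Gamma_1, \Gamma_2$ compatible with the standard triangular decomposition, satisfying both the grading condition on $\np$ and the nilpotent condition $f_2 - f_1 \in \n^-_{0,-1}$. The isomorphism of vertex algebras then follows from Theorem \ref{intro: Main thm: PQHR}, and the equivalence of functors from Corollary \ref{intro: Main thm: PQHR for modules}.

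Nilpotent orbits in $\sll_N$ are parametrised by partitions $\lambda \vdash N$, and the closure order coincides with the dominance order. A classical combinatorial result describes the covering relations: $\lambda_1 \lessdot \lambda_2$ iff $\lambda_2 = \lambda_1 + e_i - e_j$ for some $i < j$ satisfying either (A) $j = i+1$ with $\lambda_{1,i} > \lambda_{1,i+1}$, or (B) $\lambda_{1,i} = \lambda_{1,i+1} = \cdots = \lambda_{1,j}$ (a plateau), together with the appropriate strict inequalities $\lambda_{1,i-1} > \lambda_{1,i}$ and $\lambda_{1,j} > \lambda_{1,j+1}$ at the boundaries. In both cases, the move relocates the rightmost box of row $j$ in $\lambda_1$ to become the new rightmost box of row $i$ in $\lambda_2$.

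For both types of covering we construct pyramids $P_1, P_2$ à la Elashvili–Kac, providing good even gradings $\Gamma_1, \Gamma_2$. On a fixed basis of $\C^N$, all unmoved boxes occupy identical row–column positions in $P_1$ and $P_2$; the moved basis vector $v_\ast$ sits at the rightmost column of row $j$ in $P_1$ and at the new rightmost column of row $i$ in $P_2$. In case (A), we shift the surviving boxes of row $j = i + 1$ by one column to the right in $P_2$ relative to $P_1$; in case (B), we similarly shift the surviving boxes of row $j$ one column to the right in $P_2$, while keeping the intermediate plateau rows in place. Taking $f_1$ to be the standard pyramid nilpotent of $P_1$, we set
\[
    f_2 \;:=\; f_1 \;+\; E_{v_{i,\mathrm{last}},\, v_\ast},
\]
where $v_{i,\mathrm{last}}$ denotes the previously-rightmost box of row $i$ in $P_1$. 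Every term of $f_2$ has $\Gamma_2$-weight $-1$ by construction; a direct rank computation (tracking the Jordan cycle of $v_\ast$ under $f_2$ and its action on the quotient) confirms $f_2 \in \OO_{\lambda_2}$, and $(\Gamma_2, f_2)$ is then checked to be a good grading. Since $f_\circ := f_2 - f_1 = E_{v_{i,\mathrm{last}},\, v_\ast}$ is a single matrix unit with bigrading exactly $(0,-1)$ by the column-shift design, the nilpotent condition is immediate.

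The main obstacle is the verification of the grading condition $\np = \np_{0,0} \oplus \np_{0,1} \oplus \np_{1,0} \oplus \np_+^+$. Our specific shifts ensure that $\Gamma_2(v_k) - \Gamma_1(v_k) \in \{0, 1\}$ for every basis vector $v_k$, and moreover that any two basis vectors sharing a common $\Gamma_1$- (resp.\ $\Gamma_2$-) weight differ by at most $1$ in the other grading. These constraints imply that, for every pair $\{v_k, v_l\}$, either $E_{kl}$ or $E_{lk}$ has bigrading in the allowed set $\{(0,0), (0,1), (1,0)\} \cup \{(a,b) : a, b > 0\}$; we then obtain a compatible triangular decomposition by ordering the basis so that $\np$ consists exactly of those $E_{kl}$. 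A short case analysis distinguishing (A) from (B) and examining the interactions between the affected rows and the other rows confirms the decomposition, completing the proof.
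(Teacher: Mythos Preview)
Your strategy differs from the paper's in an interesting way. The paper takes $f_\circ$ to be a \emph{sum} of matrix units (one for each column of the shorter block), explicitly constructs a conjugating element $g\in\SL_N$ to certify $\widetilde f_\mu = f_\lambda + f_\circ \in \OO_\mu$, and fixes a very specific pyramid for $P_1$ (left-aligned on the rows $i,\dots,j$, with a further mixed alignment for the outer rows in the general case). You instead add a \emph{single} matrix unit $E_{v_{i,\mathrm{last}},v_*}$, verify membership in $\OO_\mu$ by a rank computation (which does go through: using $f_\circ f_1=0$ and $f_\circ^2=0$ one gets $f_2^m=f_1^m+f_1^{m-1}f_\circ$, and the ranks match those of $[\lambda_i+1,\lambda_j-1]$), and manufacture a compatible Borel a posteriori from the ordering by $\Gamma_1+\Gamma_2$. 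Both routes are valid existence arguments for condition~$(\bigstar)$; yours is arguably more economical, the paper's more explicit.

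That said, there is a genuine gap in your write-up: you never specify $P_1$. Your key claim ``$\Gamma_2(v_k)-\Gamma_1(v_k)\in\{0,1\}$ for every basis vector'' and the assertion that $f_\circ$ has bigrading $(0,-1)$ both \emph{fail} unless rows $i$ and $j$ are right-aligned in $P_1$ (i.e.\ their rightmost boxes sit in the same column). Indeed, if the rightmost boxes of rows $i$ and $j$ lie in columns $d$ and $c$ respectively, then $v_*$ moves from column $c$ to column $d+1$, giving a shift of $d-c+1$; and $\Gamma_1(f_\circ)=d-c$. For these to equal $1$ and $0$ you need $d=c$. In case (A) with $\lambda_i>\lambda_j$ this is a nontrivial constraint that you must impose. (The paper handles this by spelling out the exact alignment of \emph{every} row of its pyramid.) Once you add this hypothesis, your shift argument for the grading condition is correct, the compatible Borel exists (order by $\Gamma_1+\Gamma_2$, breaking ties arbitrarily), and the remaining checks---that $\Gamma_2$ is good for $f_2$ because $f_2$ lies in the Richardson $G_0$-orbit of $\g_{-1}^{\Gamma_2}$---are standard. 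So the fix is simply: declare rows $i$ and $j$ right-aligned in $P_1$.
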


As a direct consequence, we have the existence of quantum Hamiltonian reductions $\HH_{\OO_2\uparrow \OO_1}^0$ between all the nilpotent orbits ${\OO_1<\OO_2}$ in closure relation in type $A$. The reduction functor $\HH_{\OO_2\uparrow \OO_1}^0$ is obtained by composing quantum Hamiltonian reductions appearing in Theorem \ref{intro: type A general adjacent} following a ``path'' of adjacent nilpotent orbits $\OO_1<\OO_{\alpha_1}<\dots<\OO_{\alpha_n}<\OO_2$.
\begin{corollary}
    For generic levels $\kk$ and for arbitrary nilpotent orbits ${\OO_1<\OO_2}$ in $\sll_N$, there exists a quantum Hamiltonian reduction $\HH_{\OO_2\uparrow \OO_1}^0$ such that 
    \begin{align*}
        \W^\kk(\sll_N,\OO_2)\simeq \HH_{\OO_2\uparrow \OO_1}^0(\W^\kk(\sll_N,\OO_1)).
    \end{align*}
    Moreover, we have the following commutative diagram:
    \begin{center}
    \begin{tikzcd}
        \KL^k(\g)\arrow{rr}{{\HH^0_{\OO_{2}} }} \arrow{rd}{{\HH^0_{\OO_{1}} }} && \W^k(\g,\OO_2)\mod \\
        & \W^k(\g,\OO_1)\mod. \arrow{ur}{{\HH^0_{\OO_2 \uparrow \OO_1}}} &
    \end{tikzcd}
    \end{center}
\end{corollary}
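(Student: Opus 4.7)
The plan is to reduce the corollary to the adjacent case handled by Theorem \ref{intro: type A general adjacent} by traversing a chain of cover relations in the closure order on nilpotent orbits of $\sll_N$. First I would invoke the classical fact that this closure order coincides with the dominance order on partitions of $N$ and that, in any finite poset, two comparable elements $\OO_1 < \OO_2$ are connected by at least one maximal chain $\OO_1 = \OO_{\alpha_0} < \OO_{\alpha_1} < \dots < \OO_{\alpha_n} = \OO_2$ consisting entirely of adjacent pairs; combinatorially such a chain is produced by a sequence of elementary box-moves between partitions. Fix one such chain.

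Next, I would apply Theorem \ref{intro: type A general adjacent} to each consecutive pair $\OO_{\alpha_i} < \OO_{\alpha_{i+1}}$. At generic level $\kk$ this yields quantum Hamiltonian reduction functors $\HH^0_{\OO_{\alpha_{i+1}} \uparrow \OO_{\alpha_i}} \colon \W^\kk(\sll_N, \OO_{\alpha_i})\mod \to \W^\kk(\sll_N, \OO_{\alpha_{i+1}})\mod$ together with vertex-algebra isomorphisms $\HH^0_{\OO_{\alpha_{i+1}} \uparrow \OO_{\alpha_i}}(\W^\kk(\sll_N, \OO_{\alpha_i})) \simeq \W^\kk(\sll_N, \OO_{\alpha_{i+1}})$. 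Setting
\begin{equation*}
    \HH^0_{\OO_2 \uparrow \OO_1} := \HH^0_{\OO_{\alpha_n} \uparrow \OO_{\alpha_{n-1}}} \circ \cdots \circ \HH^0_{\OO_{\alpha_1} \uparrow \OO_{\alpha_0}}
\end{equation*}
and chaining the isomorphisms gives the asserted $\W^\kk(\sll_N,\OO_2) \simeq \HH^0_{\OO_2 \uparrow \OO_1}(\W^\kk(\sll_N,\OO_1))$.

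For the commutative triangle, I would invoke Corollary \ref{intro: Main thm: PQHR for modules} at each step of the chain: on $\KL^\kk(\g)$ one has a natural isomorphism $\HH^0_{\OO_{\alpha_{i+1}} \uparrow \OO_{\alpha_i}} \circ \HH^0_{\OO_{\alpha_i}} \simeq \HH^0_{\OO_{\alpha_{i+1}}}$. Composing these natural isomorphisms telescopically along the chosen chain produces a natural isomorphism $\HH^0_{\OO_2 \uparrow \OO_1} \circ \HH^0_{\OO_1} \simeq \HH^0_{\OO_2}$ of functors $\KL^\kk(\g) \to \W^\kk(\g,\OO_2)\mod$, which is exactly the commutativity of the stated triangle.

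The main subtlety I foresee is that the functor $\HH^0_{\OO_2 \uparrow \OO_1}$ built this way depends a priori on the chosen maximal chain; however, the corollary asserts only the existence of some reduction, so fixing one choice suffices. Showing independence of the chain would be the expected strengthening, and the natural route would be to assemble the iterated BRST complexes into a single multi-complex and run a spectral-sequence argument demonstrating that the total cohomology is concentrated in degree zero and intrinsically computes $\W^\kk(\sll_N,\OO_2)$; this refinement lies beyond what the present statement requires.
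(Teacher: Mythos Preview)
Your proposal is correct and follows exactly the route the paper indicates: the paper states this corollary as a ``direct consequence'' obtained by composing the adjacent-step reductions of Theorem~\ref{intro: type A general adjacent} along a chain $\OO_1<\OO_{\alpha_1}<\dots<\OO_{\alpha_n}<\OO_2$ of adjacent orbits, and your telescoping of the natural isomorphisms from Corollary~\ref{intro: Main thm: PQHR for modules} (equivalently, the ``Moreover'' clause of Theorem~\ref{intro: type A general adjacent}) is precisely how the commutative triangle is obtained. Your remark about potential chain-dependence is apt and goes beyond what the paper claims.
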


\subsection*{Organisation of the paper} The rest of the paper is organised as follows. 
In \S \ref{sec: Preliminary on W-algebras}, we introduce the set-up of
the $\W$-algebras. 
In \S \ref{sec: Wakimoto realization}, we recall Wakimoto realisations of $\W$-algebras, which are the main tools to establish the main results. 
In \S \ref{sec: Quantum Hamiltonian reductions}, we present the main results and Theorem \ref{intro: Main thm: PQHR} is proved in \S \ref{sec:proof_PQHR}. Finally, in \S \ref{sec: Case of type A}, we restrict to type $A$ and establish Theorem \ref{intro: type A general adjacent}.

\subsection*{Acknowledgements}
We thank Thibault Juillard for interesting discussions.
J.F.'s research was supported by a University of Melbourne Establishment Grant and an Andrew Sisson Support Package 2025. S.N. thanks the University of Melbourne for its hospitality during his stay while the result was obtained. 
Major progress was made while the authors stayed in Creswick for a Matrix Minor Program. We would like to thank all the Matrix Office members and staff for providing a welcoming environment and logistical support that made our work both productive and enjoyable.

\section{Preliminary on \tW-algebras}\label{sec: Preliminary on W-algebras}
\subsection{Simple Lie algebras}
Let $\g$ be a finite-dimensional simple Lie algebra over the complex numbers $\C$, $\g=\n^+\oplus \h \oplus \n^-$ a triangular decomposition, and $\g=\h \oplus \left(\bigoplus_{\alpha \in \Delta} \g_\alpha\right)$ the root space decomposition where $\Delta$ is the set of roots.
We decompose $\Delta=\Delta^{+}\sqcup \Delta^-$ into the sets of positive and negative roots, so that $\n^\pm=\bigoplus_{\alpha \in \Delta^\pm} \g_\alpha$, and denote by $\Pi=\{\alpha_1,\dots,\alpha_{\ell}\}\subset \Delta^+$ the set of simple roots.
We fix a set of corresponding (non-zero) root vectors $e_\alpha$ ($\alpha \in \Delta$) and coroots $\{h_1,\dots, h_\ell\}\subset \h$ so that $C=(\alpha_j(h_i))_{1\leq i,j\leq \ell}$ is the Cartan matrix of $\g$.
Then $\{e_\alpha \mid \alpha \in \Delta\}\cup\{h_i \mid i=1,\dots,\ell\}$ forms a basis of $\g$ and we denote by $c_{\lambda,\mu}^\nu$ the structure constants: $[h_i,e_\alpha]=c_{i,\alpha}^\alpha e_\alpha$ with $c_{i,\alpha}^\alpha=h_i(\alpha)$ and
$[e_\alpha,e_\beta]=\sum_{\gamma\in\Delta}c_{\alpha,\beta}^{\gamma}e_\gamma$.

\subsection{Nilpotent orbits and good gradings}\label{sec: Nilpotent orbits}
Let $\mathcal{N}\subset \g$ be the nilpotent cone, \ie\ the set consisting of elements $f\in\g$ such that $\ad(f)$ is nilpotent. It is closed under the adjoint action of the corresponding simple algebraic group $G$ of adjoint type whose Lie algebra is $\g$, which decomposes $\mathcal{N}$ into finitely many conjugacy classes, called the nilpotent orbits. 
In the case of  $\g=\sll_N$, the nilpotent orbits are parametrised by the partitions $\lambda=[\lambda_1^{m_1},\dots,\lambda_s^{m_s}]$ of $N$ --- with $\lambda_1>\dots >\lambda_s>0$ and the exponents $m_i$ denoting the multiplicities --- due to the Jordan classification. Setting $\OO_{\lambda}$ the nilpotent orbit corresponding to the partition $\lambda$,
we have 
$\mathcal{N}(\sll_N)=\bigsqcup_{\lambda\in \mathscr{P}_{N}} \OO_\lambda$ with $\mathscr{P}_{N}$ the set of partitions of $N$. 

Given a nilpotent element $f\in\g$, a $\frac{1}{2}\Z$-grading $\Gamma$ on $\g=\bigoplus_{d\in \frac{1}{2}\Z}\g_d$ is said \emph{good} for $f$ if the following conditions are satisfied:
\begin{align}
   f \in \g_{-1},\quad \ker \ad(f) \cap \g_+=0,\quad \g_{-}\subset \im \ad(f)
\end{align}
where we set $\g_{\pm}=\bigoplus_{d>0}\g_{\pm d}$.
The pair $(f,\Gamma)$ is then called a \emph{good pair} \cite{KRW03}. 
Two good pairs $(f,\Gamma)$, $(f',\Gamma')$ are equivalent if they are conjugated under the adjoint $G$-action.
Given a (non-zero) nilpotent element, the good gradings are classified up to equivalence \cite{EK05} and expressed in terms of pyramids \cite{BG05}. 

The pyramids are labelled by Young diagrams with shiftings.
We explain concretely with an example. 
The Young tableau of the nilpotent orbit $\oo{3,2}$ in $\g=\sll_5$ is the diagram  
\begin{equation*}
    \begin{tikzpicture}[every node/.style={draw,regular polygon sides=4,minimum size=1cm,line width=0.04em},scale=0.5, transform shape]
        \node at (1,0)  {1};
        \node at (0,0)  {2};
        \node at (-1,0) {4};
        \node at (0,1)  {3};
        \node at (-1,1) {5};
        \draw[<-] (3,-1) --(-3,-1);
        \node[draw=none,regular polygon sides=0,minimum size=0pt,line width=0pt] at (3,-0.5) {{$x$-axis}};
    \end{tikzpicture}
\end{equation*}
Here, the boxes are labelled from right to left by the indices $\{1,2,3,4,5\}$ of the basis of the natural representation $\C^5$ of $\sll_5$.
From a pyramid, we read the nilpotent element $f$ given by
\begin{align}\label{association of nilpotent orbit}
    f=\sum_{i\rightarrow j}\E_{i,j}
\end{align}
where the summation is over indexes $(i,j)$ such that the boxes {\tiny $\numtableaux{i}$} and {\tiny $\numtableaux{j}$} are adjacent with {\tiny $\numtableaux{i}$} on the {left} of {\tiny $\numtableaux{j}$}; 
thus the Jordan class of $f$ agrees with the nilpotent orbit $\oo{3,2}$ we started with.

The grading is obtained by defining a $\frac{1}{2}\Z$-grading on the natural representation by reading the $x$-coordinates for the basis, which correspond to the eigenvalues of the adjoint action of the semisimple element
\begin{equation}\label{eq:def_grading}
    x=\sum_{i}\left(x_i\E_{i,i}\right)-\frac{1}{N}\left(\sum_{i}x_i\right)I_N
\end{equation}
where $x_i$ is the $x$-coordinate of the centre of the box {\tiny $\numtableaux{i}$}.
The good gradings defined in this way are compatible with the triangular decomposition in the sense that $\g_\pm \subset \n^\pm$ holds. This is the consequence of labelling the pyramids with the smaller numbers to the right. 
All the good pairs can be realised in this way up to conjugation by $G$.
We will use the decomposition of the root system $\Delta$ with respect to the good grading $\Gamma$:
\begin{align}
    \Delta_j=\{\alpha \in \Delta; \Gamma(\alpha)=j\},\quad \Delta_{>0}=\{\alpha \in \Delta; \Gamma(\alpha)>0\}. 
\end{align}

In this paper, we only consider \emph{even} good gradings, \ie\ those which are $\Z$-gradings. 
In particular, for all nilpotent orbits in type $A$, one can find a good even grading \cite{BG05,EK05}.
Therefore, a simple root $\alpha_i\in\Pi$ has grading $0$ or $1$. For convenience, we denote by $I_j$ ($j=0,1$) the sets of indexes $i=1,\dots,\ell$ such that $\Gamma(\alpha_i)=j$.

\subsection{\tW-algebras}\label{sec:W-algebras}
Given a simple Lie algebra $\g$, denote by
\begin{equation}
\widehat{\g}=\g[t^{\pm 1}] \oplus \ \C K
\end{equation}
the affine Kac--Moody algebra associated with $\g$. 
The universal affine vertex algebra $\V^\kk(\g)$ associated with $\g$ at level $\kk\in\C$ is the parabolic Verma $\widehat{\g}$-module
\begin{equation}
    \V^\kk(\g)=U(\widehat{\g})\otimes_{U(\g[t]\oplus\C K)}\C_\kk
\end{equation}
where $\C_\kk$ is the one-dimensional representation of $\g[t]\oplus\C K$ on which $\g[t]$ acts trivially and $K$ acts as the multiplication by the scalar $\kk$.
There is a unique vertex algebra structure on $\V^\kk(\g)$, which is strongly generated by the fields
\begin{equation}
    u(z)=Y(u_{-1},z)=\sum_{n\in\Z}u_n z^{-n-1},\qquad u_n=u\otimes t^n,\, u\in\g,
\end{equation}
satisfying the OPEs
\begin{align}
    u(z)v(w)\sim \frac{[u,v](w)}{(z-w)}+\frac{\kk(u,v)}{(z-w)^2}.
\end{align}
where $(\cdot, \cdot)$ is the normalised invariant bilinear form on $\g$.

In general, we denote by $Y(A,z)$ --- or briefly $A(z)$ ---
the field associated with an element $A$ in a vertex algebra, by $\wun$ the vacuum vector, and use the notation $AB$ (instead of $\no{AB}$) for the normally ordered product of $A$ and $B$ for simplicity. 

The $\W$-algebras are vertex algebras obtained as the quantum Hamiltonian reductions of the affine vertex algebras $\V^\kk(\g)$ \cite{FF90, KRW03}. 
Fix an \emph{even} good pair $(f, \Gamma)$ consisting of a (non-zero) nilpotent element $f$ and a $\Z$-grading $\Gamma$ good with respect to $f$.
Let $\bigwedge{}^{\bullet}_{\varphi,\varphi^*}$ denote the $bc$-system strongly and freely generated by the odd fields $\varphi(z),\varphi^*(z)$ satisfying the OPEs
\begin{align}
 \varphi(z)\varphi^*(w)\sim \frac{1}{(z-w)},\quad  \varphi(z)\varphi(w)\sim 0\sim \varphi^*(z)\varphi^*(w).
\end{align}
For each positively $\Gamma$-graded root $\alpha$ in $\Delta_{>0}$, let us denote by $\bigwedge{}^{\bullet}_{\varphi_\alpha,\varphi^*_\alpha}$ the copy of $\bigwedge{}^{\bullet}_{\varphi,\varphi^*}$ generated by the fields $\varphi_\alpha(z),\varphi^*_\alpha(z)$.

The BRST complex associated to the good pair $(f, \Gamma)$ is defined as
\begin{align}\label{BRST cohomology}
    C_f^\bullet(\V^\kk(\g))=\V^\kk(\g)\otimes \chfermion,\quad
    \chfermion:=\bigotimes_{\alpha \in \Delta_{>0}} \bigwedge{}^{\bullet}_{\varphi_\alpha,\varphi^*_\alpha}
\end{align}
and equipped with the differential 
\begin{align}\label{BRST differential}
    d=\int Y(Q,z)\ \dd z,\quad Q=Q_{\mathrm{st}}+Q_\chi
\end{align}
where
\begin{equation}
    Q_{\mathrm{st}}=\sum_{\alpha\in\Delta_{>0}} e_\alpha \varphi_\alpha^*-\frac{1}{2}\sum_{\alpha,\beta,\gamma\in\Delta_{>0}} c_{\alpha,\beta}^\gamma \varphi_\alpha^*\varphi_\beta^* \varphi_\gamma,\qquad
    Q_\chi=\sum_{\alpha\in\Delta_{>0}} (f,e_\alpha)\varphi_\alpha^*.
\end{equation}
The $\W$-algebra $\W^\kk(\g,f)$ associated with $\g$ and the good pair $(f,\Gamma)$ at level $\kk$ is defined as 
\begin{align}
    \W^\kk(\g,f)=\HH^0(C_f^\bullet(\V^\kk(\g)),d).
\end{align}
{Note that $\W^\kk(\g,f)$ is freely strongly generated by fields that correspond to a basis of the centraliser $\g^f$ homogeneous for the grading $\Gamma$ \cite{KW04}.}
However, $\W^\kk(\g,f)$ is independent as a vertex algebra of the choice of the representative $f$ in a given nilpotent orbit $\OO$ and the good grading $\Gamma$ \cite{AKM15, GJ25}, so we will occasionally use the notation $\W^\kk(\g,\OO)$ in the following.
In addition, replacing $\V^\kk(\g)$ by a $\V^\kk(\g)$-module $M$ in \eqref{BRST cohomology}, we obtain a functor from the category of $\V^\kk(\g)$-modules to the category of $\W^\kk(\g,\OO)$-modules, 
\begin{align}
    \HH^0_\OO\colon \V^\kk(\g)\mod \rightarrow \W^\kk(\g,\OO)\mod.
\end{align}

For later use, let us rewrite the BRST complex \eqref{BRST cohomology} using the semi-infinite cohomology for the Lie algebra $\g_+(\!(t)\!)$. Consider the one-dimensional $\g_+(\!(t)\!)$-module $\C_{\chi}$ on which $\g_+(\!(t)\!)$ acts by
\begin{equation}
    u_n \mapsto \mathrm{Res}_{t=0} (f,u)t^n \mathrm{d}t=\delta_{n,-1}(f,u).
\end{equation}
Consider the tensor product $\g_+(\!(t)\!)$-module $\V^\kk(\g)\otimes \C_\chi$  and define the semi-infinite complex
\begin{align}\label{semi-infinite complex}
    C^{\frac{\infty}{2}+\bullet}(\g_+(\!(t)\!); {\V^\kk(\g)}\otimes \C_\chi):= ({\V^\kk(\g)}\otimes \C_\chi)\otimes \chfermion
\end{align}
equipped with the differential $d_{\mathrm{st}}=\int Y(Q_{\mathrm{st}},z) \dz$, see \eqref{BRST differential}.
Then, \eqref{semi-infinite complex} agrees with the BRST complex $C_f^\bullet(\V^\kk(\g))$.
Replacing $\V^\kk(\g)$ by a $\V^\kk(\g)$-module $M$, we again obtain an isomorphism
\begin{align}\label{BRST is semi-infinite}
    \HH^\bullet_\OO(M)\simeq \HH^{\frac{\infty}{2}+\bullet}(\g_+(\!(t)\!);M\otimes \C_\chi).
\end{align}

\section{Wakimoto realisation}\label{sec: Wakimoto realization}
Let $G$ be a simple algebraic group (of adjoint type) whose Lie algebra is $\Lie(G)=\g$.
For simplicity, we denote the positive nilpotent part $\n^+:=\np$.
Let $\Np\subset G$ be the unipotent subgroup with $\Lie(\Np)=\np$.
Using the exponential map $\ee:\np\overset{\sim}{\longrightarrow}\Np$, one may identify the coordinate rings $\C[\Np]\simeq \C[z_\alpha\mid \alpha\in \Delta^+]$ where $z_\alpha$ is the coordinate of the root vector $e_\alpha\in\np$ ($\alpha \in \Delta^+$).
The left multiplication of $\Np$ on itself induces an anti-homomorphism $\rho^L:\np\to\D(\Np)$ where $\D(\Np)$ is the ring of differential operators on $\Np$. 
Then, one has 
\begin{equation}\label{eq:rho}
    \rho^L(e_{\alpha_i})=\sum_{\alpha\in \Delta^+}P_i^{\alpha}(z)\partial_{z_\alpha} 
\end{equation}
for some polynomials $P_i^{\alpha}(z)$ in $\C[\Np]$. 
More intuitively, by taking a faithful representation of $\Np$ and using the generic element $g(z)=g(z_\alpha|\alpha \in \Delta^+)$, the coefficients $P_i^{\alpha}(z)$ are uniquely determined by the equality 
\begin{equation}\label{eq:left_action}
    \ee^{\epsilon e_i} g(z)=g(z_\alpha+\epsilon P^\alpha_i(z)\mid \alpha \in \Delta^+)
\end{equation}
where $\epsilon$ is a dual number, \ie\ a variable satisfying $\epsilon^2=0$. 

The affine vertex algebra $\V^\kk(\g)$ admits a free fields realisation 
\begin{align}\label{Wakimoto realization for affine}
   \Psi\colon \V^\kk(\g)\hookrightarrow \bg{ \Delta^+ } \otimes \heis{\kk+\hv}
\end{align}
where $\heis{\kk+\hv}$ denotes the Heisenberg vertex algebra associated with the Cartan subalgebra $\h$ at level $\kk+\hv$ and $\bg{ \Delta^+ }$ is a tensor product of $|\Delta^+|$ copies of $\beta\gamma$-systems attached to each positive root.
This embedding is known as the \emph{Wakimoto realisation} (see \eg\ \cite{Fre07}). 
In addition, it gives rise to a family of $\V^\kk(\g)$-modules  $\affWak{\lambda}:=\bg{\Np}\otimes \Fock{\lambda}{\kk+\hv}$ ($\lambda\in \h^*$) obtained by restriction, called the \emph{Wakimoto representations}. Here $\Fock{\lambda}{\kk+\hv}$ is the Fock module over $\heis{\kk+\hv}$ of highest weight $\lambda$.
Note that \eqref{Wakimoto realization for affine} is restricted to 
\begin{align}
     \Psi^R_{\Np}\colon \V^0(\np)\hookrightarrow \bg{ \Delta^+ },
\end{align}
which is induced from the \emph{right} multiplication $\rho^R \colon \np\rightarrow \D(\Np)$. 

The affine vertex algebra $\V^\kk(\g)$ is isomorphic to the image of $\Psi$. At generic levels, it is equal to the intersection of the kernels of the screening operators
\begin{align}\label{affine screenings}
    S_i=\int S_i(z)\ \dd z \colon \affWak{0} \rightarrow \affWak{-\alpha_i},\quad S_i(z)= Y(\Psi^L_N(e_i)\fockIO{i},z),
\end{align}
associated with the simple root vectors $e_i$ ($1\leq i \leq \ell$). 
Here $\Psi^L_N(e_i)$ is the element in $\bg{\Delta^+}$ obtained from \eqref{eq:rho} by replacing $z_\alpha$ with $\gamma_\alpha$ and $\partial_{z_\alpha}$ with $\beta_\alpha$ (and taking normally ordered products when necessary).
Moreover, the embedding \eqref{Wakimoto realization for affine} is completed into an exact sequence at generic levels $\kk$ (such as $\kk \notin \Q$)
\begin{align}\label{Wakimoto resolution of affine}
    0\rightarrow  \V^\kk(\g) \overset{\Psi}{\longrightarrow} \affWak{0}\overset{\bigoplus S_i}{\longrightarrow} \bigoplus_{i=1,\ldots,\ell} \affWak{-\alpha_i}\rightarrow \mathrm{C}_2 \rightarrow \cdots \rightarrow \mathrm{C}_{n_\circ} \rightarrow 0
\end{align}
where
\begin{align}
    \mathrm{C}_i=\bigoplus_{\begin{subarray}c w\in W\\ \ell(w)=i\end{subarray}} \affWak{w\circ 0},
\end{align}
see for instance \cite[Proposition 4.2]{ACL19}. 
Here $W$ is the Weyl group of $\g$ and $\ell\colon W\rightarrow \Z_{>0}$ the length function with ${n_\circ}$ the length of the longest element, and $\circ\colon W\times \h^*\rightarrow \h^*$ the dot action given by $w\circ\lambda=w(\lambda+\rho)-\rho$ with $\rho$ the Weyl vector.

The Wakimoto realisations of the $\W$-algebras are obtained by applying $\HH_{\OO}^0$ to \eqref{Wakimoto resolution of affine}.
Since the $\V^\kk(\g)$-modules $\affWak{\lambda}$ are $\HH_{\OO}$-acyclic (see \eg\ \cite[Proposition 4.5]{Gen20}), we obtain the complex 
\begin{align}\label{Wakimoto resolution of W algebras}
    0\rightarrow  \W^\kk(\g,\OO) \rightarrow \HH_{\OO}^0(\affWak{0})\overset{\bigoplus S_i^\OO}{\longrightarrow} \bigoplus_{i=1,\ldots,\ell} \HH_{\OO}^0(\affWak{-\alpha_i})\rightarrow \cdots \rightarrow \HH_{\OO}^0(\mathrm{C}_{n_\circ}) \rightarrow 0,
\end{align}
which is exact by the cohomology vanishing $\HH_{\OO}^{\neq0}(\V^\kk(\g))=0$ \cite{KW04, Ara15a}.
The $\W^\kk(\g,\OO)$-modules $\HH_{\OO}^0(\affWak{\lambda})$ and the induced screening operators $S_i^\OO$ are explicitly obtained by using appropriate coordinates systems on $\Np$.

Setting $\np_0=\np\cap \g_0$, the subalgebra $\np$ decomposes as the semidirect product $\np= \np_0 \ltimes \g_+$. Correspondingly, we have the group semidirect product $\Np= \Np_0 \ltimes N^+$.
Then by \cite[Proposition 4.5, Theorem 4.8]{Gen20}, we have
\begin{align}\label{Wakimoto for Walg}
    \HH_{\OO}^0(\affWak{\lambda})\simeq \affWak{\OO,\lambda}:=\bg{\Delta^+_0} \otimes \Fock{\lambda}{\kk+\hv}
\end{align}
where $\Delta^+_0=\Delta^+\cap\Delta_0$, and the screening operators $S_i^\OO$ are expressed as
\begin{equation}\label{screeinings for Walg}
    S_i^\OO=\int S_i^\OO(z)\ \dd z,\quad S_i^\OO(z):=Y(P_i^\OO\fockIO{i},z)
\end{equation}
where 
\begin{align}
    P_i^\OO=\begin{cases}
         \displaystyle{\sum_{\alpha \in \Delta_{0}}P_i^{\alpha}}\beta_\alpha& (i \in I_0), \\
         \displaystyle{\sum_{\alpha \in \Delta^+}(f,e_\alpha)P_i^{\alpha}}& (i \in I_{1}).
    \end{cases}
\end{align}
Roughly, they are obtained from $S_i$ by evaluating $\beta_\alpha$ as $\beta_\alpha$ and  $(f,e_\alpha)$ depending on the grading $\Gamma(\alpha)=0,1$. 
Since, \eqref{eq:left_action} leads to
\begin{equation}
\ee^{\epsilon e_i}g=\ee^{\epsilon e_i}g_0g_+=g_0 \ \ee^{\epsilon (g_0 \ast e_i) }g_+
\end{equation}
where, we set $g \ast u= g^{-1}u g$ and use the decomposition $g=g_0g_+$ in $\Np=\Np_0\ltimes N^+$. 
Hence, we have more concisely
\begin{align}
    P_i^\OO=\Psi^L_{N_0}(e_i)\quad (i \in I_0),\qquad P_i^\OO=(f, g_0 \ast e_i)\quad (i \in I_1)
\end{align}

The Wakimoto realisation of the $\W$-algebras is as follows.
\begin{theorem}[\cite{Gen20}]\label{thm: Wakimoto general form}
At generic levels $\kk$, there is an isomorphism of vertex algebras 
\begin{align}\label{eq: Wakimoto for Walg}
    \W^\kk(\g,\OO)\simeq \bigcap_{i=1,\dots,\ell} \ker  S_i^\OO \subset \bg{\Delta_0^+}\otimes \heis{\kk+\hv}.
\end{align}
\end{theorem}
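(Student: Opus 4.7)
The plan is to deduce the theorem by applying the reduction functor $\HH_\OO^\bullet$ termwise to the Wakimoto resolution \eqref{Wakimoto resolution of affine} of the affine vertex algebra and reading off $\W^\kk(\g,\OO)$ as an intersection of screening kernels.

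Since the Wakimoto modules $\affWak{\lambda}$ are $\HH_\OO$-acyclic and $\HH_\OO^{\neq 0}(\V^\kk(\g))=0$ by \cite{KW04, Ara15a}, applying $\HH_\OO^0$ termwise to \eqref{Wakimoto resolution of affine} yields the exact sequence \eqref{Wakimoto resolution of W algebras}. Consequently, $\W^\kk(\g,\OO)$ is identified with the kernel of the first differential
\[
    \bigoplus_{i=1}^\ell S_i^\OO \colon \HH_\OO^0(\affWak{0}) \longrightarrow \bigoplus_{i=1}^\ell \HH_\OO^0(\affWak{-\alpha_i}),
\]
which is by definition the intersection $\bigcap_i \ker S_i^\OO$.

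It then remains to transport this description to the reduced free-field side. Invoking the identification \eqref{Wakimoto for Walg}, one replaces $\HH_\OO^0(\affWak{0})$ by the smaller free-field vertex algebra $\bg{\Delta_0^+}\otimes \heis{\kk+\hv}$, and the induced screenings $S_i^\OO$ take the explicit form \eqref{screeinings for Walg}. Combining these two steps yields the asserted isomorphism at generic $\kk$.

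The main obstacle, already settled in \cite{Gen20}, is establishing the free-field realisation $\HH_\OO^0(\affWak{\lambda})\simeq \affWak{\OO,\lambda}$ and tracking the induced screenings. Conceptually, this requires a spectral-sequence argument in which each positively $\Gamma$-graded $\beta\gamma$-pair cancels against a BRST ghost partner, so that only the $\Delta_0^+$-contributions survive in degree zero. The descent of the screenings then follows by inspection of the chain-level cocycle representatives: for $i\in I_0$ only the $\np_0$-part of $\Psi^L_N(e_i)$ survives, whereas for $i\in I_1$ the $d_\chi$ piece of the BRST differential effectively substitutes each $\beta_\alpha$ by the scalar $(f,e_\alpha)$, exactly reproducing the two cases of \eqref{screeinings for Walg}.
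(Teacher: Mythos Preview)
Your proposal is correct and follows essentially the same approach as the paper's own exposition: the paper does not give a separate proof for this theorem (it is cited from \cite{Gen20}), but the surrounding discussion \eqref{Wakimoto resolution of affine}--\eqref{screeinings for Walg} lays out exactly the argument you reproduce --- apply $\HH_\OO^\bullet$ to the Wakimoto resolution, use acyclicity and cohomology vanishing to obtain the exact complex \eqref{Wakimoto resolution of W algebras}, invoke the identification \eqref{Wakimoto for Walg} of \cite{Gen20}, and read off the screenings. Your final paragraph correctly isolates the part that genuinely requires the work of \cite{Gen20}.
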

\begin{remark}
    The Wakimoto realisation still exists for non-generic levels, but the isomorphism in Theorem \ref{thm: Wakimoto general form} relaxed into an embedding.
\end{remark}

Recall that there is an embedding of vertex algebras
\begin{equation}
\V^{\kk^\natural}(\g_0^f) \hookrightarrow \W^\kk(\g,\OO)
\end{equation}
for some level $\kk^\natural$ \cite{KRW03}. The restriction $\V^{0}(\np_0^f)\subset \V^{\kk^\natural}(\g_0^f)$ can be realised explicitly through the Wakimoto realisation as follows.

\begin{proposition}\label{remaining Wakimoto}
For even good gradings $\Gamma$, the Wakimoto embedding 
$$\Psi^R_{\Np_0^f}\colon \V^0(\np_0^f)\hookrightarrow \bg{\Delta_0^+}$$ factors through the embedding \eqref{eq: Wakimoto for Walg}, \ie\ 
\begin{equation}
\Psi^R_{\Np_0^f}\colon \V^0(\np_0^f)\hookrightarrow  \W^\kk(\g,\OO)\subset \bg{\Delta_0^+} \otimes  \Fock{\lambda}{\kk+\hv}.
\end{equation}
\end{proposition}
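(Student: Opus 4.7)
The plan is to leverage Theorem~\ref{thm: Wakimoto general form}, which identifies the image of $\W^\kk(\g,\OO)$ inside $\bg{\Delta_0^+}\otimes\heis{\kk+\hv}$ with the intersection $\bigcap_{i=1}^{\ell}\ker S_i^\OO$. Since the fields $\Psi^R_{\Np_0^f}(u)$ for $u\in\np_0^f$ strongly generate $\V^0(\np_0^f)$ and each screening charge acts as a derivation on OPEs, it suffices to verify that $S_i^\OO\cdot\Psi^R_{\Np_0^f}(u)=0$ for all $i=1,\dots,\ell$ and $u\in\np_0^f$. Because $\Psi^R_{\Np_0^f}(u)$ lives in the $\bg{\Delta_0^+}$ factor, it has trivial OPE with the exponential $\fockIO{i}$ entering $S_i^\OO(z)$, and the question reduces to showing that the OPE of $P_i^\OO(z)$ with $\Psi^R_{\Np_0^f}(u)(w)$ inside $\bg{\Delta_0^+}$ contributes no residue to the contour integral.

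For $i\in I_0$, one has $P_i^\OO=\Psi^L_{\Np_0}(e_i)$, the Wakimoto chiralisation of the left translation action of $e_i\in\np_0$ on $\Np_0$. Since left and right translations on a Lie group commute as vector fields, $[\rho^L(e_i),\rho^R(u)]=0$ in $\D(\Np_0)$. Both $\Psi^L_{\Np_0}(e_i)$ and $\Psi^R_{\Np_0^f}(u)$ are $\beta\gamma$-fields linear in the $\beta_\alpha$'s arising from the same chiralisation of $\D(\Np_0)$, so a direct contraction count shows that the singular part of their OPE is governed by this classical commutator and therefore vanishes. Hence $S_i^\OO$ commutes with $\Psi^R_{\Np_0^f}(u)$ on the nose.

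For $i\in I_1$, the evenness of $\Gamma$ forces $e_i\in\g_1$, and $P_i^\OO=(f,g_0\ast e_i)$ is a pure polynomial in the $\gamma_\alpha$'s. Writing $\Psi^R_{\Np_0^f}(u)(z)=\sum_{\alpha}Q_u^\alpha(\gamma)(z)\beta_\alpha(z)$, the OPE involves a single $\beta\gamma$-contraction and produces at most a simple pole with residue $\bigl(\rho^R(u)(P_i^\OO)\bigr)(w)$. An infinitesimal right translation $g_0\mapsto g_0(\wun+\epsilon u)$ combined with the invariance of the normalised bilinear form and the defining relation $[f,u]=0$ for $u\in\np_0^f$ gives
\begin{equation*}
    \rho^R(u)\bigl((f,g_0\ast e_i)\bigr)=-\bigl(f,[u,g_0\ast e_i]\bigr)=-\bigl([f,u],g_0\ast e_i\bigr)=0,
\end{equation*}
so the OPE is regular and $S_i^\OO$ also annihilates $\Psi^R_{\Np_0^f}(u)$. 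The step requiring most care is the $i\in I_0$ case: one must confirm that no quantum anomaly, that is, no double pole, obstructs the commutation $[\Psi^L_{\Np_0}(e_i)(z),\Psi^R_{\Np_0^f}(u)(w)]\sim 0$. This can be settled by an explicit Wick computation or by appealing to the chiral differential operator interpretation of $\bg{\Delta_0^+}$, and it is the only genuine subtlety, since the rest of the argument is a direct lift of classical identities on $\Np_0$ to the vertex algebra level.
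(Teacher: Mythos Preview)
Your proposal is correct and follows essentially the same approach as the paper: reduce to checking that each $P_i^\OO$ has trivial $\lambda$-bracket with $\Psi^R_{\Np_0^f}(u)$, use commutativity of left and right translations on $\Np_0$ for $i\in I_0$, and the invariance computation $([f,u],g_0\ast e_i)=0$ for $i\in I_1$. The paper's proof simply asserts $[\rho^L_{\Np_0}(e_i)_\lambda\,\rho^R_{\Np_0^f}(u)]=0$ from the classical commutativity without discussing the possible double-pole anomaly you flag, so your treatment of the $i\in I_0$ case is in fact slightly more careful than the paper's.
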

\proof
By Theorem \ref{thm: Wakimoto general form}, it suffices to check 
$\lbr{P_i^\OO}{\rho^R_{\np_0^f}(u)}=0$ for $u\in \np_0^f$ and all $i$.
For $i \in I_0$, the commutativity of two actions (left and right multiplications $\Np_0 \curvearrowright \Np_0 \curvearrowleft \Np_0$) implies 
\begin{equation}
\lbr{P_i^\OO}{\rho^R_{\np_0^f}(u)}=\lbr{\rho^L_{\np_0^f}(e_i)}{\rho^R_{\np_0^f}(u)}=0.
\end{equation}
For $i \in I_1$, $P_i^\OO=(f, g_0 \ast e_i)$ is expressed only in terms of $\gamma_\alpha$ $(\alpha \in \Delta_0)$, and thus 
\begin{equation}
\lbr{P_i^\OO}{\rho^R_{\np_0^f}(u)}=-\rho^R_{\np_0^f}(u) (f, g_0 \ast e_i)
\end{equation}
where the right-hand side corresponds to the derivation of the function $(f, g_0 \ast e_i)$ by $\rho^R_{\np_0^f}(u)$. Hence, it is equal to the $\epsilon$-linear term, denoted by $\CT_\epsilon(?)$, of $(f, g_0 \ast e_i)$ with replacement $g_0\mapsto g_0\ee^{\epsilon u}$ by definition. Thus 
\begin{equation}
\begin{aligned}
    \lbr{P_i^\OO}{\rho^R_{\np_0^f}(u)}
    &=-\rho^R_{\np_0^f}(u) (f, g_0 \ast e_i)=\CT_\epsilon\left(-(f,(g_0\ee^{\epsilon u}) \ast e_i) \right)\\
    &=(f, [u,g_0 \ast e_i])=([f,u], g_0 \ast e_i)=0.
\end{aligned} 
\end{equation}
This completes the proof.
\endproof

\section{Quantum Hamiltonian reductions}\label{sec: Quantum Hamiltonian reductions}
Let us consider two good \emph{even} pairs $(f_i,\Gamma_i)$ ($i=1,2$) and set
\begin{equation}
    \g_{i,j}=\{x\in\g\mid\Gamma_1(x)=i\text{ and }\Gamma_2(x)=j\},\qquad \n^\pm_{i,j}=\g_{i,j}\cap\n^\pm
\end{equation}
where $i,j\in\Z$ that define
Assume that the bi-grading $(\Gamma_1,\Gamma_2)$
\begin{equation}
    \g=\bigoplus_{i,j\in \Z}\g_{i,j}\supset \n^\pm=\bigoplus_{i,j\in \Z}\n^\pm_{i,j}
\end{equation}
and the difference $f_o=f_2-f_1$ satisfy the following condition ($\bigstar$):
\begin{itemize}
    \item[($\bigstar$-1)] Grading condition: $\n(=\np^+)$ decomposes as 
        \begin{align}\label{decomp of nilp}
      \np=\np_{0,0}\oplus \np_{0,1} \oplus \np_{1,0} \oplus \np^+_{+},\qquad \np^+_{+}=\bigoplus_{i,j>0}\np_{i,j};
    \end{align}
    \item[($\bigstar$-2)] Nilpotent condition: $f_o$ lies in $\n^{-}_{0,-1}$.
\end{itemize}

Note in particular that the condition ($\bigstar$) implies the inclusion of the nilpotent orbits closures $\overline{\OO}_1\subset\overline{\OO}_2$ of $f_1,f_2$, that we denote by $\OO_1\leq\OO_2$.
The following properties are clear from the shape of the bi-grading. 
\begin{lemma}\label{consequences of the condition} 
Under the condition $(\bigstar)$ we have the following:
\begin{enumerate}
    \item The subalgebras $\np_{1,0}$ and $\np_{0,1}$ are abelian.
    \item The bilinear form
    \begin{align}\label{symplectic form}
        \omega: (\np_{0,1}/\np_{0,1}^{f_1}) \times \np_{1,0} \rightarrow \C,\quad (u,v)\mapsto (f_1,[u,v])
    \end{align}
    is non-degenerate.
\end{enumerate}
\end{lemma}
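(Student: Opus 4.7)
Part (1) is purely a bi-degree count. For $u,v\in\np_{1,0}$ the bracket $[u,v]$ lies in $\g_{2,0}$, and this space is forced to vanish by ($\bigstar$-1): none of the summands $\np_{0,0},\np_{0,1},\np_{1,0},\np^+_{+}$ in the decomposition of $\np$ carries bi-degree $(2,0)$, since $\np^+_{+}$ consists of $\np_{i,j}$ with \emph{both} indices strictly positive. The same argument applied to $\np_{0,1}$ gives $[u,v]\in\g_{0,2}\cap\np=0$.

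For part (2), the plan is to rewrite $\omega$ via invariance of the bilinear form and then exploit the goodness conditions on both gradings. The key preliminary step is to show that $f_1$ is bi-homogeneous of degree $(-1,-1)$: the goodness of $(f_1,\Gamma_1)$ gives $f_1\in\g^{\Gamma_1}_{-1}$ for free, while projecting the identity $f_2=f_1+f_o$ onto the $\Gamma_2$-weight spaces — using $f_2\in\g^{\Gamma_2}_{-1}$ (from goodness of $(f_2,\Gamma_2)$) together with $f_o\in\g_{0,-1}\subset\g^{\Gamma_2}_{-1}$ — forces every $\Gamma_2$-component of $f_1$ outside degree $-1$ to vanish.

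With $f_1\in\g_{-1,-1}$ established, invariance rewrites $\omega(u,v)=([f_1,u],v)=-([f_1,v],u)$. For $u\in\np_{0,1}$ we have $[f_1,u]\in\g_{-1,0}\subset\n^-$ (by compatibility $\g^{\Gamma_1}_{-1}\subset\n^-$), and the invariant form pairs $\g_{-1,0}$ non-degenerately with $\g_{1,0}=\np_{1,0}$; hence $\omega(u,\cdot)\equiv 0$ is equivalent to $[f_1,u]=0$, i.e.\ $u\in\np_{0,1}^{f_1}$, which shows the left kernel descends trivially to the quotient. Symmetrically, if $v\in\np_{1,0}$ satisfies $\omega(\cdot,v)\equiv 0$, then $[f_1,v]\in\g_{0,-1}$ is orthogonal to $\g_{0,1}=\np_{0,1}$ and so $[f_1,v]=0$; since $v\in\g_{1,0}\subset\g^{\Gamma_1}_{+}$, the goodness condition $\ker\ad(f_1)\cap\g^{\Gamma_1}_{+}=0$ forces $v=0$. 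The only mildly non-routine point is pinning down the bi-degree of $f_1$; the rest is standard invariant-form bookkeeping.
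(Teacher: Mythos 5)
Your proof is correct and follows essentially the same route as the paper: both parts (1) and (2) reduce to the bi-degree count $\np_{2,0}=\np_{0,2}=0$ and to the observation that $f_1=f_2-f_\circ$ is bi-homogeneous of degree $(-1,-1)$, so that $[f_1,\cdot]$ identifies $\np_{0,1}/\np_{0,1}^{f_1}$ with a subspace of $\g_{-1,0}$ paired non-degenerately with $\np_{1,0}$. You are in fact somewhat more careful than the paper, which only spells out the left-kernel direction; your use of $\ker\ad(f_1)\cap\g^{\Gamma_1}_{+}=0$ to kill the right kernel is a clean way to supply the step the paper leaves implicit.
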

\proof
For (1), the assertion follows from 
\begin{equation}
    [\np_{1,0},\np_{1,0}]\subset \np_{2,0}=0,\qquad [\np_{0,1},\np_{0,1}]\subset \np_{0,2}=0.
\end{equation}
For (2), 
Since $f_1=f_2-f_\circ\in\n^-_{-1,-1}$, $[f_1,x]$ has value in $\n^-_{-1,0}$ for $x\in \np_{0,1}$. Moreover,
\begin{equation}
[f_1,\cdot\ ]\colon \np_{0,1}/\np_{0,1}^{f_1} \hookrightarrow \n^-_{-1,0}
\end{equation}
is injective.
Now, the assertion follows since $(\cdot,\cdot)\colon \n^-_{-1,0} \times \np_{1,0} \rightarrow \C$ is non-degenerate.
\endproof

\begin{remark}
    The condition presented in \cite{GJ25} for even gradings corresponds to the case $\np_{1,0}=0$.
\end{remark}

Now, thanks to Proposition \ref{remaining Wakimoto}, one may consider the quantum Hamiltonian reduction by using the following BRST complex:
\begin{align}
    C_{\OO_2 \uparrow \OO_1}^\bullet(\W^\kk(\g,\OO_1))=\W^\kk(\g,\OO_1)\otimes  \chfermion,\quad
    \chfermion:=\bigotimes_{\alpha \in \mathcal{B}_{0,1}^{f_1}} \bigwedge{}^{\bullet}_{\varphi_\alpha,\varphi^*_\alpha}
\end{align}
equipped with the differential 
\begin{align}\label{BRST differential_bis}
    d_{\OO_2 \uparrow \OO_1}=\int Y(Q_{\OO_2 \uparrow \OO_1},z)\ \dd z,\quad Q_{\OO_2 \uparrow \OO_1}=\sum_{\alpha \in \mathcal{B}_{0,1}^{f_1}} \left(\alpha+(f_\circ,\alpha)\right)\varphi_\alpha^*,
\end{align}
where $\mathcal{B}_{0,1}^{f_1}$ is a basis of $\np_{0,1}^{f_1}$.
We denote by $\HH_{\OO_2 \uparrow \OO_1}^\bullet(\W^\kk(\g,\OO_1))$ the cohomology.
Note that the shape of the complex and the differential are similar to the usual BRST cohomology defined in \S\ref{sec:W-algebras}, but expressions are simpler than \eqref{BRST cohomology} and \eqref{BRST differential} as $\np_{0,1}$ and thus $\np_{0,1}^{f_1}$ is abelian by Lemma \ref{consequences of the condition}. 
We note that such a type of differential typically appears for the BRST reduction associated with \emph{the minimal nilpotent orbits in type $A$}.
This is the generalisation of the Virasoro and Bershadsky-Polyakov type reductions studied in \cite{FFFN, FKN24}, corresponding to the cases $\g=\sll_2,\sll_3$, respectively.
The main result is the following. 
\begin{theorem}\label{Main thm: PQHR}
    For generic levels $\kk$, there is an isomorphism of vertex algebras 
    \begin{equation*}
        \HH^0_{\OO_2 \uparrow \OO_1}(\W^\kk(\g,\OO_1))\simeq \W^\kk(\g,\OO_2).
    \end{equation*}
\end{theorem}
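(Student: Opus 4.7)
The plan is to reduce the problem to a free-field computation by applying the Wakimoto realisations of both $\W^\kk(\g,\OO_1)$ and $\W^\kk(\g,\OO_2)$ from Theorem \ref{thm: Wakimoto general form}. By that theorem, $\W^\kk(\g,\OO_1)$ sits inside $\bg{\Delta_0^+(\Gamma_1)}\otimes\heis{\kk+\hv}$ as the intersection of kernels of screenings $S_i^{\OO_1}$, and by Proposition \ref{remaining Wakimoto}, the currents $\alpha\in\np_{0,1}^{f_1}$ that drive the BRST differential admit explicit images in $\bg{\Delta_0^+(\Gamma_1)}$ via the right-Wakimoto map $\rho^R_{\np_0^{f_1}}$. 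Consequently, $d_{\OO_2\uparrow\OO_1}$ extends naturally to the enlarged free-field complex
\begin{equation*}
    \bigl(\bg{\Delta_0^+(\Gamma_1)}\otimes\heis{\kk+\hv}\bigr)\otimes\chfermion,
\end{equation*}
and the theorem amounts to (i) computing the cohomology of this enlarged complex and (ii) checking that the lifted screenings $S_i^{\OO_1}$ induce, on the degree-zero cohomology, exactly the screenings $S_i^{\OO_2}$ of $\W^\kk(\g,\OO_2)$.

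For (i), condition $(\bigstar\text{-}1)$ allows me to factor the $\beta\gamma$-system as $\bg{\Delta_0^+(\Gamma_1)}=\bg{\Delta_{0,0}^+}\otimes\bg{\mathcal{B}_{0,1}^{f_1}}\otimes\bg{\mathcal{B}_V}$, where $V\subset\np_{0,1}$ is a complement to $\np_{0,1}^{f_1}$. Lemma \ref{consequences of the condition}(2) identifies $V$ with $\np_{1,0}^*$ via the non-degenerate pairing $\omega$, and the dimensions match: $|\Delta_{0,0}^+|+\dim V=|\Delta_{0,0}^+|+|\Delta_{1,0}^+|=|\Delta_0^+(\Gamma_2)|$ equals the rank of the $\beta\gamma$-system underlying the Wakimoto realisation of $\W^\kk(\g,\OO_2)$. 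The key computation is then to isolate, via a suitable filtration on $\bg{\Delta_0^+(\Gamma_1)}\otimes\chfermion$ (for instance by a refinement of the conformal weight together with the polynomial degree in certain $\gamma$'s), a contractible Koszul-type subcomplex pairing $\beta_\alpha\leftrightarrow\varphi_\alpha^*$ for $\alpha\in\mathcal{B}_{0,1}^{f_1}$, together with a linear change of variables driven by $\omega$ and the $(f_\circ,\alpha)$-shifts which identifies $\bg{\mathcal{B}_V}$ with $\bg{\Delta_{1,0}^+}$. A spectral sequence then yields the enlarged cohomology as $\bg{\Delta_0^+(\Gamma_2)}\otimes\heis{\kk+\hv}$, concentrated in degree zero.

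For (ii), the residual screening operators on the cohomology come from two sources: the original $S_i^{\OO_1}$ for those indices $i$ whose screening type is the same with respect to $\Gamma_1$ and $\Gamma_2$, and new contributions packaged into the coordinate change of step (i) for those indices whose type changes between $\Gamma_1$ and $\Gamma_2$. Using the description $P_i^\OO=(f,g_0\ast e_i)$ recalled in \S \ref{sec: Wakimoto realization}, one can verify index by index, according to the four cases $(\Gamma_1(\alpha_i),\Gamma_2(\alpha_i))\in\{0,1\}^2$, that the transformed operators are exactly $\{S_i^{\OO_2}\}_{i=1,\ldots,\ell}$. Applying Theorem \ref{thm: Wakimoto general form} to $\W^\kk(\g,\OO_2)$ then identifies the degree-zero cohomology with $\W^\kk(\g,\OO_2)$ as a vertex algebra.

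The main obstacle I expect is step (i). The Wakimoto image of $\alpha\in\np_{0,1}^{f_1}$ is not simply $\beta_\alpha$ but a polynomial in the $\beta$'s and $\gamma$'s of $\bg{\Delta_0^+(\Gamma_1)}$ whose coefficients encode the right action $\rho^R_{\np_0^{f_1}}$, so the filtration must be chosen carefully so that on the associated graded level the leading Koszul pairing is clean and the subleading pieces, the $(f_\circ,\alpha)$-shifts, and the pairing $\omega$ assemble consistently into the change of coordinates producing $\bg{\Delta_{1,0}^+}$. A secondary subtlety is the compatibility of the lifted screenings $S_i^{\OO_1}$ with the enlarged BRST differential: one must check they remain closed on the enlarged complex and descend compatibly on cohomology, which should follow from weight and degree considerations together with the acyclicity of the Wakimoto resolution \eqref{Wakimoto resolution of W algebras} at generic levels.
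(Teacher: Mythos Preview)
Your overall strategy is exactly the paper's: pass to the Wakimoto realisation, compute the BRST cohomology of the free-field complex, and identify the induced screenings with those of $\W^\kk(\g,\OO_2)$. The structure of your argument is correct.

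However, the ``main obstacle'' you anticipate in step (i) does not exist, and the paper's argument is considerably simpler than the filtration/spectral-sequence machinery you propose. The point is that $\np_{0,1}$ is \emph{abelian} (Lemma \ref{consequences of the condition}(1)) and $\Np_0^{\Gamma_1}=\Np_{0,0}\ltimes\Np_{0,1}$, so the right action of $u\in\np_{0,1}^{f_1}$ on $g_0g_{0,1}$ is pure translation in the $\Np_{0,1}$-coordinates. Choosing a basis $u_1,\dots,u_m$ of $\np_{0,1}$ with $u_{n+1},\dots,u_m$ spanning $\np_{0,1}^{f_1}$, the Wakimoto image of $u_i$ is \emph{literally} $\beta_i$, with no polynomial corrections. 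The differential is therefore already in Koszul form,
\[
Q_{\OO_2\uparrow\OO_1}=\sum_{n<i\leq m}\bigl(\beta_i+(f_\circ,u_i)\bigr)\varphi_i^*,
\]
and the Poincar\'e lemma gives the cohomology directly as $\bg{\Np_{0,0}\times\Np_{0,1}^\omega}\otimes\heis{\kk+\hv}$, concentrated in degree zero. No filtration is needed.

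For the identification with $\bg{\Delta_0^+(\Gamma_2)}$, the paper uses a chiral \emph{Fourier transform} $\beta_i\mapsto-\widehat{\gamma}_i$, $\gamma_i\mapsto\widehat{\beta}_i$ along $\Np_{0,1}^\omega$ rather than a linear change of variables; this is what turns the $\gamma$-type screening $P_i^{\OO_1}$ for $i\in I_{1,0}$ into the $\beta$-type screening $P_i^{\OO_2}$ (compare Lemma \ref{list of screening}). With this in hand, one checks $[P_i^{\OO_1}]=\pm P_i^{\OO_2}$ case by case, exactly as you outline in step (ii), and the sign is harmless for kernels.
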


Let $\KL^\kk(\g)$ denote the Kazhdan--Lusztig category of $\V^\kk(\g)$-modules, \ie\ the category of $\V^\kk(\g)$-modules which are bounded from below by the conformal grading with finite-dimensional graded spaces.
For generic levels $\kk$, $\KL^\kk(\g)$ is semisimple and the simple modules are the Weyl modules 
\begin{equation*}
    \weyl{\lambda}^\kk=U(\widehat{\g})\otimes_{U(\g[t]\oplus\C K)}L_\lambda,
\end{equation*}
which are induced from the simple highest weight $\g$-modules $L_\lambda$ with dominant integral highest weights $\lambda \in P_+$. The isomorphism in Theorem \ref{Main thm: PQHR} is generalised to modules obtained from $\KL^\kk(\g)$ as follows.
\begin{theorem}\label{Main thm: PQHR for modules}
    For $\V^\kk(\g)$-modules $M$ in $\KL^\kk(\g)$ at generic levels $\kk$, there is an isomorphism of $\W^\kk(\g,\OO_2)$-modules  
    $\HH^0_{\OO_2 \uparrow \OO_1}\circ\HH_{\OO_1}(M) \simeq \HH_{\OO_2}(M)$.
    Moreover, the following functors are naturally isomorphic
    \begin{equation*}
        \HH^0_{\OO_2}, \HH^0_{\OO_2 \uparrow \OO_1}\circ\HH_{\OO_1}^0\colon \KL^\kk(\g) \rightarrow \W^\kk(\g,\OO_2)\mod.
    \end{equation*}
\end{theorem}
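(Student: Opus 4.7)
The strategy is to deduce Theorem \ref{Main thm: PQHR for modules} from Theorem \ref{Main thm: PQHR} by propagating the vacuum-level statement along Wakimoto resolutions of Weyl modules. Since $\KL^\kk(\g)$ is semisimple at generic levels with simple objects the Weyl modules $\weyl{\lambda}^\kk$ ($\lambda\in P_+$), and both $\HH^0_{\OO_2}$ and $\HH^0_{\OO_2 \uparrow \OO_1} \circ \HH^0_{\OO_1}$ are additive functors, it suffices to construct an isomorphism on each $\weyl{\lambda}^\kk$ that is natural in the argument; additivity then promotes the pointwise isomorphism to a natural isomorphism of functors on all of $\KL^\kk(\g)$.

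The first step is to extend \eqref{Wakimoto resolution of affine} to an arbitrary Weyl module: at generic $\kk$, one has a BGG-type resolution
\[
0 \rightarrow \weyl{\lambda}^\kk \rightarrow \affWak{\lambda} \rightarrow \bigoplus_{\ell(w)=1}\affWak{w\circ \lambda} \rightarrow \cdots \rightarrow \bigoplus_{\ell(w)=n_\circ}\affWak{w\circ \lambda} \rightarrow 0,
\]
with differentials induced by the screenings $S_i$ of \eqref{affine screenings}. Applying the exact functor $\HH^0_{\OO_1}$ and invoking the $\HH_\OO$-acyclicity of Wakimoto modules together with the identification \eqref{Wakimoto for Walg}, one obtains a resolution of $\HH^0_{\OO_1}(\weyl{\lambda}^\kk)$ by the Wakimoto-type $\W^\kk(\g,\OO_1)$-modules $\affWak{\OO_1,w\circ \lambda}$, with differentials built from the induced screenings $S_i^{\OO_1}$.

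The core technical step is then to establish the following module-level refinement of Theorem \ref{Main thm: PQHR}:
\[
\HH^n_{\OO_2 \uparrow \OO_1}\bigl(\affWak{\OO_1,\mu}\bigr) \simeq \delta_{n,0}\ \affWak{\OO_2,\mu} \qquad (\mu \in \h^*),
\]
together with the compatibility of induced screenings, \ie\ that $\HH^0_{\OO_2\uparrow\OO_1}$ sends $S_i^{\OO_1}$ to $S_i^{\OO_2}$. Granted these, applying $\HH^0_{\OO_2 \uparrow \OO_1}$ termwise to the previous resolution produces precisely the Wakimoto resolution of $\HH^0_{\OO_2}(\weyl{\lambda}^\kk)$, forcing the sought isomorphism, while naturality then follows from the functoriality of every BRST complex involved.

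The main obstacle lies in the acyclicity and explicit identification of $\HH^\bullet_{\OO_2 \uparrow \OO_1}(\affWak{\OO_1,\mu})$, which I expect to handle by adapting the argument that proves Theorem \ref{Main thm: PQHR}: a charge (or conformal-weight) filtration on $C^\bullet_{\OO_2 \uparrow \OO_1}(\affWak{\OO_1,\mu})$ should yield a spectral sequence whose first page is computable via the non-degenerate pairing of Lemma \ref{consequences of the condition}(2), by which the $\np_{0,1}^{f_1}$-ghost pairs cancel against the corresponding $\beta\gamma$-coordinates transverse to $\affWak{\OO_2,\mu}$. The twist by $f_\circ$ in \eqref{BRST differential_bis} is what morally converts, at the level of surviving screenings, the ones of type $(f_1,\Gamma_1)$ into those of type $(f_2,\Gamma_2)$ as described in Proposition \ref{remaining Wakimoto}, while the Heisenberg Fock factor $\Fock{\mu}{\kk+\hv}$ enters only through the commuting action of $\heis{\kk+\hv}$ and plays no role in the cancellation. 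The compatibility of the induced screenings is then the delicate bookkeeping piece that remains and should follow once the cohomology is identified up to a canonical trivialisation of the $\np_{0,1}^{f_1}$-contractible pairs.
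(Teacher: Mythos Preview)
Your overall strategy coincides with the paper's: reduce to Weyl modules by semisimplicity of $\KL^\kk(\g)$, push their Wakimoto resolutions through $\HH_{\OO_1}^0$ and then $\HH_{\OO_2\uparrow\OO_1}^0$, and match the result with the $\OO_2$-resolution. Two aspects of the mechanism are misstated, however.

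The acyclicity $\HH^\bullet_{\OO_2\uparrow\OO_1}(\affWak{\OO_1,\mu})\simeq\delta_{\bullet,0}(\cdots)$ needs no filtration or spectral sequence. By Proposition~\ref{remaining Wakimoto} the embedding $\V^0(\np_{0,1}^{f_1})\hookrightarrow\W^\kk(\g,\OO_1)$ lands in $\bg{\Delta_0^{\OO_1}}$ as $u_i\mapsto\beta_i$ ($n<i\leq m$), so the differential on the Wakimoto module is literally $\sum_{n<i\leq m}(\beta_i+(f_\circ,u_i))\varphi_i^*$, and the Poincar\'e lemma kills each ghost pair against the matching $(\beta_i,\gamma_i)$. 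Lemma~\ref{consequences of the condition}(2) plays \emph{no} role in this cancellation: that pairing is between $\np_{0,1}/\np_{0,1}^{f_1}$ and $\np_{1,0}$, precisely the part that \emph{survives}. Its actual role comes in the next step, which you do not mention: the cohomology one obtains is $\bg{N_0\times N_{0,1}^\omega}\otimes\Fock{\mu}{\kk+\hv}$, and to identify this with $\affWak{\OO_2,\mu}$ one applies a chiral Fourier transform $(\beta_i,\gamma_i)\mapsto(-\widehat\gamma_i,\widehat\beta_i)$ along $N_{0,1}^\omega\simeq N_{1,0}$; only under this transform do the induced screening currents $[P_i^{\OO_1}]$ match $\pm P_i^{\OO_2}$ (cf.\ Lemma~\ref{list of screening}).

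One further detail you elide: the differentials in the Wakimoto resolution of $\weyl{\lambda}^\kk$ are not the simple screenings $S_i$ but their $n$-fold products $S_{i,\lambda}[n]$ with $n=\lambda(h_i)+1$ (integrated over a local system on a configuration space). Showing that these products induce the corresponding products on the $\OO_2$-side after both reductions requires a separate argument, which the paper imports from \cite[Proposition~8.1]{FKN24}.
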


\section{Proof of Theorem \ref{Main thm: PQHR} and \ref{Main thm: PQHR for modules}}\label{sec:proof_PQHR}
We decompose the sets of roots $\Delta^+$ and those of simple roots $I$ into
\begin{align*}
    \Delta^+=\Delta_{0,0}\sqcup \Delta_{0,1}\sqcup \Delta_{1,0}\sqcup \Delta_{+}^+,\qquad  I=I_{0,0}\sqcup I_{0,1}\sqcup I_{1,0}\sqcup I_{1,1}.
\end{align*}
By setting 
$$\Delta_0^{\OO_1}=\Delta_{0,0} \sqcup \Delta_{0,1},\qquad \Delta_0^{\OO_2}=\Delta_{0,0} \sqcup \Delta_{1,0},$$ we obtain the following realisation of $\W$-algebras $\W^\kk(\g, \OO_i)$ for $i=1,2$ from  Theorem \ref{thm: Wakimoto general form}.
\begin{proposition}\label{thm: Wakimoto special form}
At generic levels $\kk$, there is an isomorphism of vertex algebras 
\begin{align*}
    &\W^\kk(\g,\OO_i)\simeq \bigcap_{i\in I} \ker  S_i^{\OO_i} \subset \bg{\Delta_0^{\OO_i}}\otimes\heis{\kk+\hv}
\end{align*}
with $i=1,2$ and
\begin{align*}
    P_i^{\OO_1}=\begin{cases}
         \Psi^L_{N_0}(e_i),& \\
         \Psi^L_{N_{0,1}}(g_0 \ast e_i),& \\
         (f_1,(g_0g_{0,1}) \ast e_i),& \\
         (f_1,g_0 \ast e_i),& 
    \end{cases}\
    P_i^{\OO_2}=\begin{cases}
         \Psi^L_{N_0}(e_i),&  (i \in I_{0,0}) \\
         (f_2,(g_0g_{1,0}) \ast e_i),&  (i \in I_{0,1}) \\
         \Psi^L_{N_{1,0}}(g_0 \ast e_i),&  (i \in I_{1,0}) \\
         (f_2,g_0 \ast e_i),&  (i \in I_{1,1}).
    \end{cases}
\end{align*}
\end{proposition}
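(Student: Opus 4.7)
The plan is to start from Theorem \ref{thm: Wakimoto general form} applied to each pair $(f_j,\Gamma_j)$ for $j=1,2$, and then to make the screening momenta $P_i^{\OO_j}$ explicit by exploiting the bi-grading. Using Lemma \ref{consequences of the condition}, $\np_{0,1}$ is an abelian ideal of $\np_0^{\OO_1}=\np_{0,0}\oplus\np_{0,1}$, so $N_0^{\OO_1}=N_{0,0}\ltimes N_{0,1}$ and a generic element factorises as $g_0^{\OO_1}=g_0\, g_{0,1}$ with $g_0\in N_{0,0}$ and $g_{0,1}\in N_{0,1}$; symmetrically $N_0^{\OO_2}=N_{0,0}\ltimes N_{1,0}$ with $g_0^{\OO_2}=g_0\, g_{1,0}$. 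A preliminary observation is that combining $f_j\in\g_{-1}^{\Gamma_j}$ with $(\bigstar\text{-}2)$ forces $f_1\in\g_{-1,-1}$ and $f_2=f_1+f_\circ\in\g_{-1,-1}\oplus\g_{0,-1}$; in particular $(f_1,\cdot)$ vanishes outside $\g_{1,1}$ while $(f_2,\cdot)$ vanishes outside $\g_{1,1}\oplus\g_{0,1}$.

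The grade-zero screenings follow from \eqref{eq:left_action} applied to the factorisation. For $\OO_1$ and $i\in I_{0,0}$, left multiplication by $\exp(\epsilon e_i)$ only modifies $g_0$, giving $P_i^{\OO_1}=\Psi^L_{N_0}(e_i)$; for $i\in I_{0,1}$, the identity $\exp(\epsilon e_i)g_0 g_{0,1}=g_0\exp(\epsilon\, g_0\ast e_i)g_{0,1}$ produces $P_i^{\OO_1}=\Psi^L_{N_{0,1}}(g_0\ast e_i)$. The $\OO_2$ case is handled by the same argument after swapping the roles of $\np_{0,1}$ and $\np_{1,0}$.

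For the positive-grade screenings $i\in I_1^{\OO_j}$, the general formula $P_i^{\OO_j}=(f_j,g_0^{\OO_j}\ast e_i)$ combined with the factorisations yields the raw expressions $(f_1,(g_0 g_{0,1})\ast e_i)$ and $(f_2,(g_0 g_{1,0})\ast e_i)$. Expanding $g_0^{\OO_1}\ast e_i=g_{0,1}\ast(g_0\ast e_i)$ as a power series in $\ad(\log g_{0,1})$, each commutator increases the second bi-grade by one; for $i\in I_{1,1}$, $g_0\ast e_i\in\g_{1,1}$ while further commutators land in $\bigoplus_{k\geq 2}\g_{1,k}$, which pair to zero with $f_1$, so the $g_{0,1}$-dependence drops and $P_i^{\OO_1}=(f_1,g_0\ast e_i)$. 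For $i\in I_{1,0}$, $g_0\ast e_i\in\g_{1,0}$ and only the first commutator with $\log g_{0,1}$ reaches $\g_{1,1}$, so the closed form $(f_1,(g_0 g_{0,1})\ast e_i)$ is retained. The $\OO_2$ case follows the same pattern with $g_{0,1}$ replaced by $g_{1,0}$, except that for $i\in I_{0,1}$ both the zeroth-order term (pairing $g_0\ast e_i\in\g_{0,1}$ with $f_\circ$) and the first commutator (pairing the $\g_{1,1}$-component with $f_1$) survive, compactly encoded in $(f_2,(g_0 g_{1,0})\ast e_i)$.

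The technical core is this bi-graded bookkeeping: tracking which components of $g_0^{\OO_j}\ast e_i$ survive the pairing with $f_j$ and recognising the correct compact repackaging. The argument relies crucially on both halves of $(\bigstar)$: part $(\bigstar\text{-}1)$ restricts the shape of $\np$, while $(\bigstar\text{-}2)$ pins down the bi-graded location of $f_1$ and $f_2$.
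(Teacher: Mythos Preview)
Your proof is correct and follows essentially the same strategy as the paper's: both start from Theorem~\ref{thm: Wakimoto general form}, choose the factorisations $g_0^{\OO_1}=g_0\,g_{0,1}$ and $g_0^{\OO_2}=g_0\,g_{1,0}$ coming from the semidirect products $N_0^{\OO_1}=N_{0,0}\ltimes N_{0,1}$ and $N_0^{\OO_2}=N_{0,0}\ltimes N_{1,0}$, and then read off the four cases. The paper's proof is terser---it simply records the two coordinate systems on $\Np$ and notes that the exponentials on $\np_{0,1}$, $\np_{1,0}$ truncate---whereas you carry out the bi-graded bookkeeping explicitly (including the observation $f_1\in\g_{-1,-1}$, which the paper only uses later in Lemma~\ref{list of screening}); but the content is the same.
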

\begin{proof}
The coefficients $P_i^{\OO_1}$ corresponding to the first family of the screening operators are obtained from \eqref{eq:left_action} 
where we write the element $g$ in the upper unipotent group $\Np$ as 
\begin{align*}
g= g_{0}g_{0,1}g_{1,0}g_{+},\quad \text{with}\ \Np= (\Np_{0,0}\times \Np_{0,1})\times (\Np_{1,0} \times \Np^+_{+}),
\end{align*}
while the coefficients $P_i^{\OO_2}$ are computed using
\begin{align*}
g= g_{0}g_{1,0}g_{0,1}g_{+},\quad \text{with}\ \Np= (\Np_{0,0}\times \Np_{1,0})\times(\Np_{0,1} \times \Np^+_{+}),
\end{align*}
Both decompositions respect the semidirect product $\Np=\Np_0\rtimes \Np^+$ of \S\ref{sec: Wakimoto realization} with respect to the grading $\Gamma_1$ and $\Gamma_2$ respectively.

Additionally, note that since $\np_{0,1}$ and $\np_{1,0}$ are abelian by Lemma \ref{consequences of the condition} (1), the exponential map $\ee \colon \np \xrightarrow{\simeq} \Np$ truncates on these subalgebras:
$$\ee\colon \np_{0,1}\xrightarrow{\simeq}\Np_{0,1},\quad \np_{1,0}\xrightarrow{\simeq}\Np_{1,0},\quad u\mapsto \ee^{u}=I+u,$$
simplifying the expression of the coefficients $P_i^{\OO_1}$ and $P_i^{\OO_2}$.
\end{proof}

Let us derive a more concrete presentation by splitting 
$$\n_{0,1}=\n_{0,1}^\omega \oplus \n_{0,1}^{f_1} $$
so that the bilinear form $\omega \colon \n_{0,1}^\omega \times \n_{1,0}\rightarrow \C $ (see Lemma \ref{consequences of the condition}) is non-degenerate. 
Take bases $(u_1,\cdots,u_n)$, $(u_{n+1},\cdots u_m)$ of $\n_{0,1}^\omega, \n_{0,1}^{f_1}$ respectively, and a dual base $(v_1,\cdots, v_n)$ of $\n_{1,0}$ with respect to $\omega$: $\omega(u_i,v_j)=\delta_{i,j}$.
Note that by setting $u_i^*=[v_i,f_1]$ and $v_i^*=[f_1,u_i]$, we have $$(u_i^*,u_j)=\delta_{i,j},\quad (v_i^*,v_j)=\delta_{i,j}.$$
Now, let us express $g_{0,1},g_{1,0}$ by using these bases:
$$g_{i,i+1}=e^{A_{i}}=1+A_{i},\qquad A_{0}=\sum_{1\leq j \leq m} {z_j} u_j,\quad A_{1}=\sum_{1\leq j \leq n} {\widehat{z}_j} v_j.$$
\begin{lemma}\label{list of screening}
We have the following:
\begin{align*}
    P_i^{\OO_1}=\begin{cases}
         \Psi^L_{N_0}(e_i)&\\
         \displaystyle{\sum_{1\leq j \leq m}(u_j^*,g_{0} \ast e_i) \beta_j}&\\
         \displaystyle{-\sum_{1\leq j \leq n}(v_j^*,g_{0} \ast e_i) \gamma_j}&\\
         (f_1,g_{0} \ast e_i),&
    \end{cases}\
    P_i^{\OO_2}=\begin{cases}
         \Psi^L_{N_0}(e_i)&  (i \in I_{0,0}) \\
         \displaystyle{(f_o,g_{0}\ast e_i)+\sum_{1\leq j \leq n}(u_j^*,g_{0} \ast e_i) \widehat{\gamma}_j}&  (i \in I_{0,1}) \\
         \displaystyle{\sum_{1\leq j \leq n}(v_j^*,g_{0} \ast e_i) \widehat{\beta}_j}&  (i \in I_{1,0}) \\
         (f_1,g_{0} \ast e_i),&  (i \in I_{1,1}).
    \end{cases}
\end{align*}
\end{lemma}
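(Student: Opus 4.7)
The plan is to verify each of the eight formulas directly by substituting the explicit coordinates $g_{0,1} = 1 + A_0 = 1 + \sum_{j=1}^m z_j u_j$ and $g_{1,0} = 1 + A_1 = 1 + \sum_{j=1}^n \widehat{z}_j v_j$ into the abstract expressions of Proposition \ref{thm: Wakimoto special form}. The truncation of the exponentials to linear order is legitimate thanks to the abelianness of $\np_{0,1}$ and $\np_{1,0}$ recorded in Lemma \ref{consequences of the condition}(1), and the duality relations $(u_i^*, u_j) = \delta_{ij}$, $(v_i^*, v_j) = \delta_{ij}$ serve to extract the coordinate coefficients.

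The cases $i \in I_{0,0}$ and $i \in I_{1,1}$ are essentially immediate. For $i \in I_{0,0}$, left multiplication by $\ee^{\epsilon e_i}$ with $e_i \in \g_{0,0}$ preserves the factorisation $\Np_0 = \Np_{0,0} \Np_{0,1}$, so only $g_{0,0}$ is affected and $P_i^{\OO_1} = P_i^{\OO_2} = \Psi^L_{N_0}(e_i)$. For $i \in I_{1,1}$, the identity $(f_2, g_0 \ast e_i) = (f_1, g_0 \ast e_i)$ follows because $f_o \in \g_{0,-1}$ does not pair with $g_0 \ast e_i \in \g_{1,1}$.

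For the two $\Psi^L$-type formulas (namely $i \in I_{0,1}$ for $\OO_1$ and $i \in I_{1,0}$ for $\OO_2$), I would write $g_0 \ast e_i$ in the chosen basis of $\np_{0,1}$ (respectively $\np_{1,0}$). Since the relevant subalgebra is abelian, $\ee^{\epsilon (g_0 \ast e_i)} g_{0,1} = 1 + A_0 + \epsilon (g_0 \ast e_i)$, and the substitution $\partial_{z_j} \mapsto \beta_j$ (resp.\ $\partial_{\widehat{z}_j} \mapsto \widehat{\beta}_j$) returns exactly the coefficient of $u_j$ (resp.\ $v_j$) in $g_0 \ast e_i$; these coefficients equal $(u_j^*, g_0 \ast e_i)$ (resp.\ $(v_j^*, g_0 \ast e_i)$) by duality.

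For the two pairing-type formulas (namely $i \in I_{1,0}$ for $\OO_1$ and $i \in I_{0,1}$ for $\OO_2$), the plan is to expand the conjugation via Baker--Campbell--Hausdorff, for example
\begin{equation*}
(g_0 g_{0,1}) \ast e_i = (g_0 \ast e_i) - [A_0, g_0 \ast e_i] + \tfrac{1}{2}[A_0,[A_0, g_0 \ast e_i]] - \cdots
\end{equation*}
and then to pair with $f_1$ (respectively $f_2 = f_1 + f_o$). The grading constraints of $(\bigstar)$ force all but one or two terms to vanish under the pairing: for $\OO_1$, only the $\g_{1,1}$-component contributes, and invariance of the form combined with $[f_1, u_k] = v_k^*$ for $k \leq n$ together with $[f_1, u_k] = 0$ for $k > n$ produces the coefficient $-\sum_{k \leq n} z_k (v_k^*, g_0 \ast e_i)$; the case of $\OO_2$ is parallel, additionally picking up the constant term $(f_o, g_0 \ast e_i)$ from the $(0,1)$-graded piece. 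Substituting $z_k \mapsto \gamma_k$ and $\widehat{z}_k \mapsto \widehat{\gamma}_k$ then delivers the lemma. The only non-trivial bookkeeping is to confirm that higher-order BCH terms and off-diagonal grading components vanish under the pairings with $f_1$ and $f_o$; this follows cleanly from condition $(\bigstar)$, so I do not expect any genuine obstacle.
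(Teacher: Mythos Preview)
Your proposal is correct and follows essentially the same route as the paper: the cases $i\in I_{0,0}$, $i\in I_{1,1}$, and the two $\Psi^L$-type cases are declared immediate from Proposition~\ref{thm: Wakimoto special form}, while for the two pairing-type cases ($i\in I_{1,0}$ for $\OO_1$ and $i\in I_{0,1}$ for $\OO_2$) the paper expands $(g_0 e^{A_\bullet})\ast e_i$ via the adjoint series and uses the bigrading to isolate the single surviving term, exactly as you outline. Your explicit handling of the vanishing $[f_1,u_k]=0$ for $k>n$ (so that the $I_{1,0}$ sum for $\OO_1$ truncates to $1\le j\le n$) is in fact slightly more careful than the paper's own computation.
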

\proof
The coefficient $P_i^{\OO_1}$, except for $i$ in $I_{1,0}$, is immediate from the expression in Proposition \ref{thm: Wakimoto special form}.
For $i \in I_{1,0}$, we have
\begin{align*}
    P_i^{\OO_1}&=(f_1, (g_{0} e^{A_0})\ast e_i) =
    \left(f_1,\sum_{j\geq0}\frac{(-1)^j}{j!}\ad_{A_0}^j(g_0\ast e_i)\right)\\
    &=(f_1,-[A_0,g_0 \ast e_i])=-([f_1,A_0],g_0 \ast e_i)=-\sum_{1\leq j \leq m}(v_j^*,g_0 \ast e_i) \gamma_j
\end{align*}
where we used that $\ad_{A_0}^j(g_0\ast e_i)\in\np_{1,j}$ and $f_1\in \np_{-1,-1}$.

For $i$ in $I_{0,0}$ and $I_{1,0}$, the computations are clear from Proposition \ref{thm: Wakimoto special form} again. 
The case $i$ in $I_{1,1}$ follows from the characters $(f_1,\cdot\ ), (f_2,\cdot\ )\colon \np^+_+ \rightarrow \C$ being equal on $\np_+^+$.
For $i \in I_{0,1}$, we have
\begin{align*}
    P_i^{\OO_2}&=(f_2, (g_0 e^{A_1})\ast e_i) =
    \left(f_2,\sum_{j\geq0}\frac{(-1)^j}{j!}\ad_{A_1}^j(g_0\ast e_i)\right)\\
    &=(f_o,g_0\ast e_i)+([A_1,f_1],g_0 \ast e_i) 
    =(f_o,g_0\ast e_i)+\sum_{1\leq j \leq n}(u_j^*,g_0 \ast e_i) \widehat{\gamma}_j.
\end{align*}
This completes the proof.
\endproof

\begin{proof}[Proof of Theorem \ref{Main thm: PQHR}]
We have
\begin{align*}
    \V(\n_{0,1}^{f_1})\hookrightarrow \W^\kk(\g,\OO_1)\hookrightarrow \bg{\Delta_{0}^{\OO_1}}\otimes\heis{\kk+\hv},\quad u_i\mapsto \beta_i,\quad  (n<i\leq m)
\end{align*}
by Proposition \ref{remaining Wakimoto}, and thus the differential $d_{\OO_2 \uparrow \OO_1}$ in \eqref{BRST differential_bis} is expressed by 
\begin{align*}
    d_{\OO_2 \uparrow \OO_1}=\int Y(Q_{\OO_2 \uparrow \OO_1},z)\dz,\quad Q_{\OO_2 \uparrow \OO_1}=\sum_{n<i\leq m}(\beta_i+(f_o,u_i))\varphi_i^*.
\end{align*}
Hence, the Poincaré lemma implies the cohomology vanishing for the Wakimoto representations 
$$\HH_{\OO_2 \uparrow \OO_1}^\bullet(\affWak{\OO_1,\lambda})=\HH_{\OO_2 \uparrow \OO_1}^\bullet(\bg{N_0\times N_{0,1}}) \otimes \Fock{\lambda}{\kk+\hv} \simeq \delta_{\bullet,0}\bg{N_0\times N_{0,1}^\omega}\otimes \Fock{\lambda}{\kk+\hv}$$
and thus the resolution \eqref{Wakimoto resolution of W algebras} implies the isomorphism of vertex algebras 
\begin{align*}
    \HH_{\OO_2 \uparrow \OO_1}^0(\W^\kk(\g,\OO_1))\simeq \bigcap_{i\in I} \ker  [S_i^{\OO_1}] \subset \bg{N_0\times N_{0,1}^\omega}\otimes\heis{\kk+\hv}
\end{align*}
characterized by the induced screening operators
\begin{equation*}
    [S_i^{\OO_1}]=\int [S_i^{\OO_1}](z)\ \dd z,\quad [S_i^{\OO_1}]=[P_i^{\OO_1}]\fockIO{i},
\end{equation*}
where $[P_i^{\OO_1}]$ denotes the cohomology classes of $P_i^{\OO_1}$.
Hence, it suffices to compare $[P_i^{\OO_1}]$ and $P_i^{\OO_2}$ for all $i \in I$. It is straightforward to check 
\begin{align}\label{induced screening}
[P_i^{\OO_1}]=\begin{cases}
         P_i^{\OO_1}&  (i \notin I_{0,1}) \\
         -(f_o,g_0\ast e_i)+\sum_{1\leq j \leq n}(u_j^*,g_0 \ast e_i)&  (i \in I_{0,1}).
    \end{cases}
\end{align}
We then apply the Fourier transform along $N_{0,1}^\omega$:
\begin{align}\label{eq: chiral Fourier transform}
  \bg{N_0\times N_{0,1}^\omega}\otimes\heis{\kk+\hv} \xrightarrow{\simeq} \bg{N_0\times N_{1,0}}\otimes\heis{\kk+\hv},\qquad \beta_i,\gamma_i \mapsto -\widehat{\gamma}_i,\widehat{\beta}_i,  
\end{align}
which induces an isomorphism 
\begin{align}\label{eq: identify the Wakimoto}
    \HH_{\OO_2 \uparrow \OO_1}^\bullet(\affWak{\OO_1,\lambda})\simeq \affWak{\OO_2,\lambda}
\end{align}
as modules over the free-field algebra corresponding to the case $\lambda=0$. Under this identification, we have
$$[P_i^{\OO_1}]=\pm P_i^{\OO_2}$$
by comparing \eqref{induced screening} and Lemma \ref{list of screening}.
Since the sign factors do not affect the kernels of the associated screening operators, we obtain the assertion.
\end{proof}
\begin{proof}[{Proof of Theorem \ref{Main thm: PQHR for modules}}]

By \cite{ACL19}, the Weyl modules $\weyl{\lambda}^\kk$ admit resolutions by Wakimoto modules, generalising the case $\lambda=0$ of \eqref{Wakimoto resolution of affine}:
\begin{align}\label{eq: Wakimoto resolution of Weyl modules}
    0\rightarrow  \weyl{\lambda}^\kk \rightarrow \affWak{\lambda}\overset{\bigoplus S_{i,\lambda}}{\longrightarrow} \bigoplus_{i=1,\ldots,\ell} \affWak{s_i\circ \lambda}\rightarrow \mathrm{C}_2^\lambda \rightarrow \cdots \rightarrow \mathrm{C}_{n_\circ}^\lambda \rightarrow 0
\end{align}
with 
\begin{align*}
    \mathrm{C}_i^\lambda=\bigoplus_{\begin{subarray}c w\in W\\ \ell(w)=i\end{subarray}} \affWak{w\circ \lambda}.
\end{align*}
Here, $s_i\in W$ is the $i$-th simple reflection and $S_{i,\lambda}$ are the screening operators given by the formula
\begin{align}\label{product of screening operators}
    S_{i,\lambda}[n]=\int_{\Upsilon} S_{i,\lambda}(z_1)\dots S_{i,\lambda}(z_{n})\ \dd z_1\dots \dd z_{n}\colon \affWak{\lambda}\rightarrow \affWak{\lambda-n \alpha_i}
\end{align}
with $n=\lambda(h_i)+1$ for some local system $\Upsilon$ on the configuration space $Y_{n}=\{(z_1,\dots,z_{n})\mid z_p\neq z_q\}$, see \cite{ACL19} for details.

Similarly to \eqref{Wakimoto resolution of W algebras}, we obtain resolutions of the $\W^\kk(\g,\OO)$-modules $\HH_{\OO}^0(\weyl{\lambda}^\kk)$ 
\begin{align}\label{eq: Wakimoto resolution of Weyl modules 2}
    0\rightarrow  \HH_{\OO}^0(\weyl{\lambda}^\kk) \rightarrow \affWak{\OO,\lambda}\overset{\bigoplus S_{i,\lambda}^\OO}{\longrightarrow} \bigoplus_{i=1,\ldots,\ell} \affWak{\OO,s_i\circ \lambda}\rightarrow \cdots \rightarrow \HH_{\OO}^0(\mathrm{C}_{n_\circ}^\lambda) \rightarrow 0,
\end{align}
for $\OO=\OO_1,\OO_2$ by applying the functor $\HH_{\OO}^0$ to \eqref{eq: Wakimoto resolution of Weyl modules}. Here, the induced screening operators $S^\OO_{i,\lambda}$ are given by 
\begin{align}
S^\OO_{i,\lambda}[n]=\int_{\Upsilon} S_{i,\lambda}^\OO(z_1)\dots S_{i,\lambda}^\OO(z_{n})\ \dd z_1\dots \dd z_{n}\colon \affWak{\OO, \lambda}\rightarrow \affWak{\OO, \lambda-n \alpha_i}
\end{align}
by \cite[Proposition 8.1]{FKN24}. 
We apply the functor $\HH_{\OO_2 \uparrow \OO_1}^0$ to \eqref{eq: Wakimoto resolution of Weyl modules 2} for $\OO=\OO_1$ and obtain 
\begin{align}\label{Wakimoto resolution of Weyl modules 3}
\begin{split}
    0\rightarrow  \HH_{\OO_2 \uparrow \OO_1}^0\left(\HH_{\OO_1}^0(\weyl{\lambda}^\kk)\right) \rightarrow \HH_{\OO_2 \uparrow \OO_1}^0(\affWak{\OO_1,\lambda})&\overset{\bigoplus [S_{i,\lambda}^{\OO_1}]}{\longrightarrow} \bigoplus_{i=1,\ldots,\ell} \HH_{\OO_2 \uparrow \OO_1}^0(\affWak{\OO_1,s_i\circ \lambda})\\
    &\rightarrow \cdots \rightarrow \HH_{\OO_2 \uparrow \OO_1}^0(\HH_{\OO_1}^0(\mathrm{C}_{n_\circ}^\lambda)) \rightarrow 0,
    \end{split}
\end{align}
which gives an isomorphism 
\begin{align*}
    \HH_{\OO_2 \uparrow \OO_1}^0\left(\HH_{\OO_1}^0(\weyl{\lambda}^\kk)\right)\simeq 
    \bigcap_{i=1,\dots,\ell} \ker  [S_{i,\lambda}^{\OO_1}] \subset \HH_{\OO_2 \uparrow \OO_1}^0(\affWak{\OO_1,\lambda})
\end{align*}
of modules over $\HH_{\OO_2 \uparrow \OO_1}^0\left(\HH_{\OO_1}^0(\V^\kk(\g))\right)\simeq\W^\kk(\g,\OO_2)$ by Theorem \ref{Main thm: PQHR}. 
By using the Fourier transform \eqref{eq: chiral Fourier transform} and the identification of modules \eqref{eq: identify the Wakimoto}, we obtain 
\begin{center}
\begin{tikzcd}
\HH_{\OO_2 \uparrow \OO_1}^0(\affWak{\OO_1,\lambda}) \arrow{rr}{[S_{i,\lambda}^{\OO_1}]} \arrow{d}{\simeq} && \HH_{\OO_2 \uparrow \OO_1}^0(\affWak{\OO_1,s_i\circ\lambda}) \arrow{d}{\simeq}\\
\affWak{\OO_2,\lambda} \arrow{rr}{[S_{i,\lambda}^{\OO_1}]} && \affWak{\OO_2,s_i\circ\lambda}
\end{tikzcd}
\end{center}
for some induced homomorphism in the bottom, which we denote by $[S_{i,\lambda}^{\OO_1}]$ by abuse of notation. 
Again, it follows from \cite[Proposition 8.1]{FKN24} that 
\begin{align*}
    [S_{i,\lambda}^{\OO_1}][n]=\int_{\Upsilon} [S_{i,\lambda}^{\OO_1}](z_1)\dots [S_{i,\lambda}^{\OO_1}](z_{n})\ \dd z_1\dots \dd z_{n}
\end{align*}
and thus $[S_{i,\lambda}^{\OO_1}]$ is identified with $S_{i,\lambda}^{\OO_2}$ by applying the Fourier transform \eqref{eq: chiral Fourier transform} as in the proof of Theorem \ref{Main thm: PQHR}.
Hence, we obtain isomorphisms
$$\HH^0_{\OO_2 \uparrow \OO_1}\circ\HH_{\OO_1}(\weyl{\lambda}^\kk) \simeq \HH_{\OO_2}(\weyl{\lambda}^\kk)$$
of $\W^\kk(\g,\OO_2)$-modules. Since $\KL^\kk(\g)$ is semisimple with simple objects $\weyl{\lambda}^\kk$, the above isomorphisms immediately give rise to a natural isomorphism of the functors $\HH^0_{\OO_2 \uparrow \OO_1}\circ\HH_{\OO_1}^0$ and $\HH^0_{\OO_2}$. This completes the proof.
\end{proof}

\section{Adjacent orbits in type \texorpdfstring{$A$}{A}} \label{sec: Case of type 
A}
Recall that in type $A$, \ie\ $\g=\sll_N$, nilpotent orbits are parametrised by partitions $\mathscr{P}_N$ of $N$ and the partial order on the set of nilpotent orbits is given by the dominance order on $\mathscr{P}_{N}$:
let $\lambda=[\lambda_1, \lambda_2, \dots, \lambda_n]$ and $\mu=[\mu_1, \mu_2,\dots, \mu_n]$ be the partitions of $N$ which correspond to the orbits $\OO_1$, $\OO_2$, respectively,
\begin{equation}\label{eq:condition}
\OO_1 \leq \OO_2 \ \Longleftrightarrow\ \sum_{1\leq j \leq i} \lambda_j \leq \sum_{1\leq j \leq i} \mu_j\quad (1\leq i\leq n).     
\end{equation}
Here, we have $\lambda_i\geq \lambda_{i+1}$ and $\mu_i\geq \mu_{i+1}$, but $\lambda_i, \mu_i$ might be equal to zero for convenience.
Moreover, two nilpotent orbits $\OO_1<\OO_2$ are called \emph{adjacent} if there is no intermediate orbit $\OO_1< \OO_3 <\OO_2$.

In this section, we show that any pair of adjacent nilpotent orbits in $\sll_N$ verifies the condition ($\bigstar$) of \S\ref{sec: Quantum Hamiltonian reductions}, and thus we obtain the following result by Theorem \ref{Main thm: PQHR}.

\begin{theorem}\label{type A general adjacent}
    Let $\kk$ be a generic level.
    For two adjacent nilpotent orbits $\OO_1<\OO_2$ in $\g=\sll_N$, there exists a quantum Hamiltonian reduction of the minimal type $\HH_{\OO_2\uparrow \OO_1}^0$ such that 
    \begin{align*}
        \W^\kk(\sll_N,\OO_2)\simeq \HH_{\OO_2\uparrow \OO_1}^0(\W^\kk(\sll_N,\OO_1)).
    \end{align*}
\end{theorem}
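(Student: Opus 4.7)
The plan is to apply Theorem \ref{Main thm: PQHR}: for every pair of adjacent orbits $\OO_1<\OO_2$ in $\sll_N$, I will exhibit representatives $f_i\in\OO_i$ and good even gradings $\Gamma_i$ ($i=1,2$) such that the pair $(f_i,\Gamma_i)$ satisfies condition $(\bigstar)$ of \S\ref{sec: Quantum Hamiltonian reductions}.

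The first step is combinatorial: I would recall the cover relations in the dominance order on $\mathscr{P}_N$. If $\lambda,\mu\in\mathscr{P}_N$ correspond to $\OO_1,\OO_2$, then adjacency $\OO_1<\OO_2$ translates to $\mu=\lambda+\varepsilon_i-\varepsilon_j$ for some $i<j$ with either (a) $j=i+1$, or (b) $\lambda_i=\lambda_{i+1}=\cdots=\lambda_j$. In case (a) a single box moves between two adjacent rows; in case (b) it sweeps across a block of equal-length rows.

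The second and main step is to construct, for each case, two pyramids $P_1,P_2$ of shapes $\lambda,\mu$ respectively, sharing a common labelling of boxes by $\{1,\ldots,N\}$ and with all centres of boxes on an integer lattice of the $x$-axis, so that the induced good gradings $\Gamma_1,\Gamma_2$ are $\Z$-gradings. Rows not involved in the move are placed identically in $P_1$ and $P_2$, with the same labels and the same $x$-coordinates. In case (a), a single labelled box is lifted from row $i+1$ into row $i$ and shifted horizontally by exactly one unit; reading $f_1,f_2$ from \eqref{association of nilpotent orbit} then yields $f_\circ=f_2-f_1=\E_{a,b}$ where the associated root $\alpha$ has $(\Gamma_1(\alpha),\Gamma_2(\alpha))=(0,-1)$. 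In case (b), the equal-length rows are flush-aligned in $P_1$ and repositioned in $P_2$ so that only the endpoints of the ``moved path'' change in $x$-coordinate; the resulting $f_\circ$ is a sum of elementary matrices each attached to a root of bi-grading $(0,-1)$. This establishes condition ($\bigstar$-2).

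Finally, condition ($\bigstar$-1) is checked box-by-box: since the $x$-coordinates of $P_1$ and $P_2$ agree outside the moved block and differ there by exactly one unit, every positive root $\alpha$ of $\sll_N$ has $(\Gamma_1(\alpha),\Gamma_2(\alpha))\in\{(0,0),(0,1),(1,0)\}\cup\Z_{>0}^2$. With $(\bigstar)$ established, Theorem \ref{Main thm: PQHR} yields the desired isomorphism. The main obstacle is case (b): the coordinated shift of a box through an entire block of equal-length rows must be engineered so that \emph{both} gradings remain good and their bi-grading stays of the allowed form. A reduction to case (a) by intermediate covers is not directly available, because moving the box through only part of the block generally produces a sequence that is not a valid chain of partitions; I therefore expect the argument to proceed by an explicit construction and a direct verification using the combinatorics of the flush-aligned block of rows.
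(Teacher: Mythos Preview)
There is a genuine gap in your verification of condition ($\bigstar$-2). You propose to read both $f_1$ and $f_2$ directly from the two pyramids via \eqref{association of nilpotent orbit} and assert that $f_\circ=f_2-f_1$ then lies in $\n^-_{0,-1}$. This already fails in your case (a) as soon as the shorter row has length $\geq 2$. For instance, for $\lambda=[3,2]\to\mu=[4,1]$ with the natural left-aligned labelled pyramids one finds $f_\mu-f_\lambda=\E_{5,4}-\E_{5,3}$, and $\E_{5,3}$ has bi-grading $(-1,-2)$, not $(0,-1)$. The obstruction is structural: whenever the moved box had an adjacency in the $\lambda$-pyramid that is broken in the $\mu$-pyramid, the corresponding summand of $f_\lambda$ appears (with a sign) in $f_\circ$ and carries $\Gamma_1$-degree $-1$, never $0$. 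More generally, ($\bigstar$-1) and ($\bigstar$-2) together force $f_1\in\n^-_{-1,-1}$ and $(f_2)_{-1,-1}=f_1$; the pyramid-read $f_\mu$ almost never satisfies this, and no relabelling of a single shifted box can repair it.

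The paper's proof does not take $f_2$ from a pyramid. It fixes $f_1=f_\lambda$ and takes $\Gamma_2$ from a $\mu$-pyramid obtained by sliding an entire block of rows left by one unit and dropping the end box, but then \emph{defines} $f_2:=\widetilde{f}_\mu=f_\lambda+f_\circ$ with an explicitly constructed $f_\circ\in\n^-_{0,-1}$ (a sum of $b$ elementary matrices, not one). The real work is then to prove that this non-pyramid element $\widetilde{f}_\mu$ actually lies in $\OO_\mu$ and that $(\widetilde{f}_\mu,\Gamma_\mu)$ is a good pair: this is done by exhibiting an explicit $\Gamma_\mu$-degree-zero matrix $g\in\SL_N$ with $g^{-1}\ast\widetilde{f}_\mu=f_\mu$, first reducing to the height-two situation $[a,b]\to[a+1,b-1]$ and then embedding back via $\tau_{\lambda^\circ},\tau_{\mu^\circ}$. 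This conjugation step is the key technical ingredient that your proposal is missing; without it there is no way to certify that $f_\lambda+f_\circ$ represents $\OO_\mu$, and with the pyramid $f_\mu$ condition ($\bigstar$-2) simply does not hold.
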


The rest of this section will be devoted to proving Theorem \ref{type A general adjacent} by specifying good pairs satisfying ($\bigstar$). We start with recalling the following well-known result; see for instance \cite{Bry}.

\begin{lemma}\label{lem: adjacent nilp}
    Two nilpotent orbits $\OO_1 , \OO_2$ of $\g=\sll_N$ with $\OO_1 <\OO_2$ are adjacent if and only of there exist $1\leq i < j \leq n$ such that 
    \begin{gather}
        \lambda_i=\mu_i-1,\quad {\lambda_{i+1}=\lambda_{i+2}=\dots=\lambda_{j}=\mu_j+1},\quad\lambda_k=\mu_k\quad (k \neq i,j)\label{cond:adj1}\\
        {\text{and either}\quad j=i+1\quad\text{or}\quad \lambda_i=\lambda_{i+1}.}\label{cond:adj2}
    \end{gather}
\end{lemma}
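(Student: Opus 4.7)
The plan is to work with the partial sums $S_k(\rho) = \sum_{m \leq k} \rho_m$, so that $\rho \leq \rho'$ in dominance order translates to $S_k(\rho) \leq S_k(\rho')$ for all $k$, and to analyze intermediate partitions via single-box moves of the form $\rho \mapsto \rho - e_p + e_q$ with $p < q$.

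For the \emph{sufficiency} direction, assume \eqref{cond:adj1} and \eqref{cond:adj2}. A direct computation gives $S_k(\mu) - S_k(\lambda) = 1$ for $i \leq k < j$ and $=0$ otherwise, hence $\lambda < \mu$. For any partition $\nu$ with $\lambda \leq \nu \leq \mu$, set $\delta_k = S_k(\mu) - S_k(\nu) \in \{0, 1\}$; then $\delta_k = 0$ outside $[i, j-1]$ and $\nu_k = \mu_k - \delta_k + \delta_{k-1}$. The inequality $\nu_k \geq \nu_{k+1}$ rewrites as $(\mu_k - \mu_{k+1}) + \delta_{k-1} + \delta_{k+1} \geq 2\delta_k$. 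When $j = i+1$, the only free parameter is $\delta_i \in \{0, 1\}$ (the constraint $\mu_i - \mu_{i+1} \geq 2$ follows from $\lambda$ being a partition), giving either $\nu = \mu$ or $\nu = \lambda$. When $j > i+1$, condition \eqref{cond:adj2} yields $\mu_{i+1} = \dots = \mu_{j-1} = \mu_j + 1$ and $\mu_i = \mu_j + 2$, so the inequalities collapse into the concavity relation $2\delta_k \leq \delta_{k-1} + \delta_{k+1}$ on $[i, j-1]$ with boundary zeros, which a short induction propagates into $\delta \equiv 0$ (so $\nu = \mu$) or $\delta \equiv 1$ (so $\nu = \lambda$).

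For the \emph{necessity} direction, suppose $\lambda < \mu$ is adjacent. Let $i_0$ be the smallest index with $\mu_{i_0} \neq \lambda_{i_0}$ (so $\mu_{i_0} > \lambda_{i_0}$) and take $i$ to be the largest index with $\mu_i = \mu_{i_0}$; this yields $\mu_i > \mu_{i+1}$ and $\mu_i > \lambda_i$. Symmetrically, from the greatest index $j^*$ with $\mu_{j^*} \neq \lambda_{j^*}$ (where one checks $\mu_{j^*} < \lambda_{j^*}$ via $S_{j^*}(\mu) = S_{j^*}(\lambda)$), take $j$ to be the least index with $\mu_j = \mu_{j^*}$, yielding $\mu_{j-1} > \mu_j$, $\mu_j < \lambda_j$, and $i < j$ (from $\mu_i > \mu_j$). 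The candidate $\nu = \mu - e_i + e_j$ is a partition: the general case uses the corner/co-corner conditions, and $j = i+1$ uses $\mu_i \geq \mu_j + 2$, deduced from $\mu_i > \lambda_i \geq \lambda_j > \mu_j$. Comparing partial sums gives $\lambda \leq \nu \leq \mu$. Since $\nu \neq \mu$, adjacency forces $\nu = \lambda$, i.e.\ $\mu - \lambda = e_i - e_j$, establishing \eqref{cond:adj1}.

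It remains to prove \eqref{cond:adj2} when $j > i+1$. By $\mu_{j-1} > \mu_j$ and $\mu$ non-increasing, $\mu_k \geq \mu_j + 1$ for $i < k < j$; suppose for contradiction that some $\mu_{k_0} \geq \mu_j + 2$, and let $k_1 \in (i, j)$ be the largest such index. If $k_1 < j-1$, the move $\mu - e_i + e_{k_1+1}$ is a partition (using $\mu_i > \mu_{i+1}$ and $\mu_{k_1} > \mu_{k_1+1}$) whose partial sums lie strictly between those of $\lambda$ and $\mu$ on $(k_1, j)$; if $k_1 = j-1$, the move $\mu - e_{j-1} + e_j$ works instead (the inequality $\mu_{j-1} - 1 \geq \mu_j + 1$ holds since $\mu_{j-1} \geq \mu_j + 2$). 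Either construction produces a strict intermediate partition, contradicting adjacency, and forces $\mu_{i+1} = \dots = \mu_{j-1} = \mu_j + 1$, equivalently $\lambda_i = \lambda_{i+1}$. The main obstacle is this last step: the two alternate box moves must be checked case-by-case to confirm they yield honest partitions strictly bracketed by $\lambda$ and $\mu$, which requires careful handling of the partition inequalities at the affected rows together with an explicit partial-sum calculation.
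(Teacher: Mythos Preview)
Your sufficiency argument is correct and in fact more thorough than the paper's own proof, which treats only this direction and cites the literature for the converse. One small imprecision: the relation $2\delta_k\le\delta_{k-1}+\delta_{k+1}$ holds only for interior indices $i<k<j-1$; at $k=i$ and $k=j-1$ you pick up an extra $+1$ from $\mu_i-\mu_{i+1}=\mu_{j-1}-\mu_j=1$. This is exactly what allows the solution $\delta\equiv 1$ (pure convexity with zero boundary would force $\delta\equiv 0$), and your inductive conclusion still goes through once these boundary inequalities are written correctly.

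The necessity direction has a genuine gap. After constructing $\nu=\mu-e_i+e_j$ you write ``comparing partial sums gives $\lambda\le\nu\le\mu$''. The inequality $\nu\le\mu$ is immediate, but $\lambda\le\nu$ amounts to $S_k(\lambda)\le S_k(\mu)-1$ for every $i\le k<j$, and your choice of $i,j$ (corner and cocorner of $\mu$) does not by itself guarantee this. Indeed, for the non-adjacent pair $\lambda=(3,3,1,1)$, $\mu=(4,2,2,0)$ your recipe gives $i=1$, $j=4$, $\nu=(3,2,2,1)$, and $S_2(\lambda)=6>5=S_2(\nu)$. So adjacency must be invoked \emph{before} concluding $\lambda\le\nu$, not after. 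The fix: your construction does yield $S_i(\lambda)<S_i(\mu)$ and $S_{j-1}(\lambda)<S_{j-1}(\mu)$ directly; if some intermediate $k\in(i,j-1)$ had $S_k(\lambda)=S_k(\mu)$, take the smallest such $k$, note $\lambda_k>\mu_k\ge\mu_{k+1}$, and check that the splice $(\lambda_1,\dots,\lambda_k,\mu_{k+1},\dots,\mu_n)$ is a partition strictly between $\lambda$ and $\mu$, contradicting adjacency. With this supplied, the remainder of your argument, including the case analysis for \eqref{cond:adj2}, is sound.
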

\begin{proof}
We include a proof for reader's convenience.
    We have $\OO_1 <\OO_2$ since \eqref{eq:condition} is satisfied. Assume $\OO_1\leq\OO_3<\OO_2$ with $\OO_3$ corresponding to the partition $\nu=[\nu_1,\nu_2,\dots,\nu_n]$.
    The dominance order implies directly that $\nu_k=\lambda_k=\mu_k$, for $k<i$. 
    Moreover, since $\nu\neq\mu$ and
    $$\begin{gathered}
        \left(\sum_{1\leq\ell\leq k}\mu_\ell\right)-1\leq \sum_{1\leq\ell\leq k}\nu_\ell\leq \sum_{1\leq\ell\leq k}\mu_\ell,\qquad (i\leq k<j)\\
        \text{and}\quad\sum_{1\leq\ell\leq k}\nu_\ell= \sum_{1\leq\ell\leq k}\mu_\ell,\qquad (k\geq j).\\
    \end{gathered}$$
    there is exactly one of the $i\leq k_\circ\!<j$ such that $\nu_{k_\circ}\!=\mu_{k_\circ}\!-1$, $\nu_k=\mu_k$ for $k\neq k_\circ$, and $\nu_k=\lambda_k$ for $k\geq j$. 
 
    If $j=i+1$, we have $k_\circ\!=i$ and thus $\nu=\lambda$.
    Otherwise, 
    assume $k_\circ\!\neq i$, then $\mu_{k_\circ}\!=\lambda_{k_\circ}$ and 
    $\nu_{k_\circ}\!=\lambda_{k_\circ}\!-1=\lambda_j-1=\nu_j-1<\nu_j$
    leads to a contradiction. Therefore, $k_\circ\!=i$ again and $\nu=\lambda$.
\end{proof}

The adjacency of nilpotent orbits can be illustrated pictorially on the Young tableaux by moving a box from a row down to a lower row such that all rows in between have the same size. For instance, the next figure illustrates three adjacent nilpotent orbits of $\sll_{14}$ corresponding to, from left to right, partitions $\lambda=[5,3,3,3]$, $\mu=[5,4,3,2]$, $\nu=[6,3,3,2]$ --- the box in gray is the one we drop at each step:
\begin{equation*}
\begin{tikzpicture}[every node/.style={draw,regular polygon sides=4,minimum size=1cm,line width=0.04em},scale=0.5, transform shape]
        \node[fill=gray] at (1,3)  {};
        \node at (1,2)  {};
        \node at (1,1)  {};
        \node at (1,0)  {};
        \node at (0,3)  {};
        \node at (0,2)  {};
        \node at (0,1)  {};
        \node at (0,0)  {};
        \node at (-1,3) {};
        \node at (-1,2) {};
        \node at (-1,1) {};
        \node at (-1,0) {};
        \node at (2,0) {};
        \node at (3,0) {};
    \end{tikzpicture},\hspace{2cm}
    \begin{tikzpicture}[every node/.style={draw,regular polygon sides=4,minimum size=1cm,line width=0.04em},scale=0.5, transform shape]
        \node at (-1,3)  {};
        \node at (1,2)  {};
        \node at (1,1)  {};
        \node at (1,0)  {};
        \node at (0,3)  {};
        \node at (0,2)  {};
        \node at (0,1)  {};
        \node at (0,0)  {};
        \node at (-1,2) {};
        \node at (-1,1) {};
        \node at (-1,0) {};
        \node at (2,0) {};
        \node[fill=gray] at (2,1) {};
        \node at (3,0) {};
    \end{tikzpicture},\hspace{2cm}
    \begin{tikzpicture}[every node/.style={draw,regular polygon sides=4,minimum size=1cm,line width=0.04em},scale=0.5, transform shape]
        \node at (-1,3)  {};
        \node at (1,2)  {};
        \node at (1,1)  {};
        \node at (1,0)  {};
        \node at (0,3)  {};
        \node at (0,2)  {};
        \node at (0,1)  {};
        \node at (0,0)  {};
        \node at (-1,2) {};
        \node at (-1,1) {};
        \node at (-1,0) {};
        \node at (2,0) {};
        \node at (3,0) {};
        \node[fill=gray] at (4,0) {};
    \end{tikzpicture}.
\end{equation*}

\begin{remark}
{The proof of Theorem \ref{type A general adjacent} only uses the condition \eqref{cond:adj1} of Lemma \ref{lem: adjacent nilp}. Hence, we also obtain the quantum Hamiltonian reductions between non-adjacent nilpotent orbits satisfying \eqref{cond:adj1}, such that $\lambda=[5,3,3,3]$ and $\nu=[6,3,3,2]$ in the above example.}
\end{remark}

Before specifying the representative of $\OO_1=\OO_\lambda$ and the associated good grading, we prepare some useful lemmas which hold for general even good pairs $(f,\Gamma)$ obtained from a Young tableau as in \S\ref{sec: Nilpotent orbits}.

\begin{lemma}\label{lem: description of upper nilp} Let $(f,\Gamma)$ be a good even grading obtained from a Young tableau and written $\np=\bigoplus_{d\in\Z_{\geq0}} \np_d$ the $\Gamma$-grading decomposition.
Then, we have the following 
    \begin{enumerate}
        \item $\np_{d}=\Span \{\E_{i,j}\mid i<j,\ x(i)=x(j)+d\}$,
        \item $\np^f=\np_0^f.$
    \end{enumerate}
\end{lemma}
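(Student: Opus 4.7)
The plan is to deduce both parts directly from the pyramid construction of the good even pair, without any deep input. For part (1), I would simply observe from \eqref{eq:def_grading} that $[x, \E_{i,j}] = (x(i) - x(j))\E_{i,j}$, so $\E_{i,j}$ is $\Gamma$-homogeneous of degree $x(i) - x(j)$. The labelling convention of Section~\ref{sec: Nilpotent orbits} places the smaller indices on the right of the pyramid, which ensures $x(i) \geq x(j)$ whenever $i < j$, so the upper-triangular root vectors indeed have nonnegative degree, consistently with $\np \subset \g_{\geq 0}$. Since $\{\E_{i,j} \mid i < j\}$ is an eigenbasis for $\ad(x)$ in $\np$, the $d$-th grading piece is spanned by the eigenvectors of eigenvalue $d$, which is exactly the claim.

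For part (2), I would use the defining axiom $\ker\ad(f)\cap\g_+ = 0$ of a good grading. Given $y \in \np^f$, decompose $y = \sum_{d \geq 0} y_d$ with $y_d \in \np_d$ using part~(1). Since $f \in \g_{-1}$, each bracket $[f,y_d]$ lies in $\g_{d-1}$, and these pieces are linearly independent across distinct $d$, so $[f,y]=0$ forces $[f,y_d]=0$ for every $d \geq 0$. For $d > 0$, the good grading axiom then yields $y_d = 0$, hence $y = y_0 \in \np_0^f$; the reverse inclusion $\np_0^f \subset \np^f$ is immediate.

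There is no real obstacle here: both statements are bookkeeping consequences of the construction in Section~\ref{sec: Preliminary on W-algebras}. The point of isolating this lemma is to have the combinatorial description of $\np_d$ and the vanishing of $\np_{>0}^f$ readily available for the case analysis in the remainder of Section~\ref{sec: Case of type
A}, where it is used to produce the good pair attached to $\OO_1$ and to verify condition~$(\bigstar)$ for adjacent orbits.
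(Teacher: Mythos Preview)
Your proof is correct. Part~(1) is exactly the paper's argument (``follows directly from the definition of $\Gamma$''), just written out in full. For part~(2), you take a slightly different route from the paper: the paper invokes the representation theory of $\sll_2$ (viewing $\g^f$ as the span of lowest-weight vectors for the $\sll_2$-triple through $f$, which forces nonpositive degree), whereas you appeal directly to the defining axiom $\ker\ad(f)\cap\g_+=0$ of a good grading. Your argument is arguably more elementary and applies verbatim to any good even grading, not just those arising from pyramids in type~$A$; the paper's $\sll_2$ argument has the mild advantage of also explaining why the axiom holds and connects to the description of $\np_0^f$ in Lemma~\ref{lem: description of f-inv} that follows. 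Either way the content is minimal and your write-up is fine.
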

\begin{proof}
    The first assertion follows directly from the definition of $\Gamma$ (see \ref{eq:def_grading}) while the second is a consequence of the representation theory of $\sll_2$.
\end{proof}

Let $\sll_2=\{e,h,f\}$ denote the $\sll_2$-triple so that 
\begin{align}\label{eq: constracting the sl2-triple}
    e=\sum_{i \rightarrow j} c_{j,i}\E_{j,i},\quad h {=2x}=\sum_{i \rightarrow j} c_{j,i}(\E_{j,j}-\E_{i,i}),\quad f=\sum_{i \rightarrow j} \E_{i,j}
\end{align}
where $c_{j,i} \in \C^\times$ are some constants, see \eqref{association of nilpotent orbit} for the summation rule. 
Thanks to the representation theory of $\sll_2$, we have
$$\np_0^f = \np_0 \cap \{u\in \sll_N\mid \sll_2 \cdot u=0 \}. $$
By expressing the partition $\lambda$ as $\lambda=[\underline{\lambda}_1^{m_1},\dots,\underline{\lambda}_s^{m_s}]$ with $\underline{\lambda}_1>\dots > \underline{\lambda}_s>0$, the $\sll_2$-representation  $\C^N$, that corresponds to the embedding $\sll_2\hookrightarrow \sll_N $, decomposes as 
$$\C^N \simeq \bigoplus_{1\leq a \leq s}(\C^{\underline{\lambda}_a})^{m_a}$$
as $\sll_2$-modules. 
Consider the decomposition 
$$\End_{U(\sll_2)}{(\C^N)}=\bigoplus_{1\leq a,b\leq s} \Hom_\C(\C^{m_a}\otimes \C^{\underline{\lambda}_a},\C^{m_b}\otimes \C^{\underline{\lambda}_b}),$$
then Schur's lemma implies the following:

\begin{lemma}\label{lem: description of f-inv} 
The decomposition $\End_{U(\sll_2)}{(\C^N)}=\bigoplus_{1\leq a\leq s} \End_\C(\C^{m_a})$ induces
$$\np^f= \np^f_0= \left(\np_0\cap\End_{U(\sll_2)}(\C^N)\right)^f=\bigoplus_{1\leq a \leq s} (\np_0 \cap \End (\C^{m_a})).  $$
\end{lemma}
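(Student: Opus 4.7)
The plan is to establish the three equalities in the displayed chain one by one. The leftmost, $\np^f = \np_0^f$, is exactly the content of Lemma \ref{lem: description of upper nilp}(2), so nothing further is required. The rightmost equality, $(\np_0 \cap \End_{U(\sll_2)}(\C^N))^f = \bigoplus_a(\np_0 \cap \End(\C^{m_a}))$, reduces to Schur's lemma: the $\sll_2$-decomposition $\C^N \simeq \bigoplus_a (\C^{m_a}\otimes V_{\underline{\lambda}_a})$ yields $\End_{U(\sll_2)}(\C^N) = \bigoplus_a \End(\C^{m_a})\otimes \mathrm{id}_{V_{\underline{\lambda}_a}}$; every element here tautologically commutes with $f$, so the $f$-invariance is vacuous, and intersecting with $\np_0$ distributes across the direct sum.

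The substance of the lemma is the middle equality $\np_0^f = (\np_0 \cap \End_{U(\sll_2)}(\C^N))^f$. The inclusion $\supset$ is immediate, since any $\sll_2$-equivariant endomorphism of $\C^N$ commutes with $f$ in particular. For the reverse inclusion, I would invoke the identity $\np_0^f = \np_0 \cap \{u\in\sll_N\mid \sll_2 \cdot u = 0\}$ recalled immediately before the statement, combined with the obvious inclusion $\{u\in\sll_N\mid\sll_2\cdot u=0\} \subset \End_{U(\sll_2)}(\C^N)$. Together these sandwich $\np_0^f$ inside $\np_0 \cap \End_{U(\sll_2)}(\C^N)$; and as noted above the additional $f$-invariance on the right is automatic.

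I do not anticipate a real obstacle: the lemma is a packaging of already available facts. The only step that deserves a brief justification is the recalled identity $\np_0^f = \np_0\cap\{u\mid\sll_2\cdot u=0\}$. For $u\in\np_0$, the even good-grading hypothesis gives $[h,u]=0$ since $h=2x$ acts as multiplication by $2\Gamma$, so $u$ has weight zero in the adjoint $\sll_2$-action on $\sll_N$; if moreover $[f,u]=0$, then within the irreducible $\sll_2$-subrepresentation generated by $u$ one has a weight-zero vector sitting at the bottom of its $f$-string. Since in an irreducible $\sll_2$-module this can happen only for the trivial summand, one concludes $[e,u]=0$ as well, so that $u$ is $\sll_2$-invariant. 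Assembling the three equalities then yields the lemma.
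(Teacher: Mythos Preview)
Your proposal is correct and follows exactly the route the paper intends: the paper's entire proof is the phrase ``Schur's lemma implies the following,'' together with the facts recorded immediately before the statement (Lemma~\ref{lem: description of upper nilp}(2) and the identity $\np_0^f = \np_0\cap\{u\mid\sll_2\cdot u=0\}$), and you have simply unpacked that sentence faithfully.
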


Each summand $\np_0 \cap \End (\C^{m_a})$ corresponds to the rectangular partition $\lambda_a=[\underline{\lambda}_a^{m_a}]$, and can be written more explicitly as follows:
\begin{equation*}
\np_0 \cap \End (\C^{m_a})=\Span\left\{\widehat{\E}_{i,j}\mid 1\leq i<j\leq m_a \right\},
\qquad \widehat{\E}_{i,j}=\sum_{0\leq k<n} \E_{i+m_ak,j+m_ak}
\end{equation*}
by considering the following Young tableau of $\lambda_a=[\underline{\lambda}_a^{m_a}]$
\begin{equation*}
\begin{tikzpicture}[every node/.style={draw,regular polygon sides=4,minimum size=1cm,line width=0.04em},scale=0.58, transform shape]
        \node at (2,3)  {$m_a$};
        \node at (2,2)  {$\vdots$};
        \node at (2,1)  {$2$};
        \node at (2,0)  {$1$};
        \node at (1,3)  {$2m_a$};
        \node at (1,2)  {$\vdots$};
        \node at (1,1)  {\scriptsize{$m_a\!\!+\!2$}};
        \node at (1,0)  {\scriptsize{$m_a\!\!+\!1$}};
        \node at (0,3)  {$\cdots$};
        \node at (0,2)  {};
        \node at (0,1)  {$\cdots$};
        \node at (0,0)  {$\cdots$};
        \node at (-1,3)  {$N_a$};
        \node at (-1,2)  {$\vdots$};
        \node at (-1,1)  {\scriptsize{$N_a'\!+\!2$}};
        \node at (-1,0)  {\scriptsize{$N_a'\!+\!1$}};
    \end{tikzpicture}
\end{equation*}
where $N_a'=m_a(\underline{\lambda}_a-1)$ and $N_a=m_a\underline{\lambda}_a$.

\begin{proof}[Proof for Theorem \ref{type A general adjacent}]
Thanks to Lemma \ref{lem: adjacent nilp}, there exist $1\leq i < j \leq n$ such that 
\begin{align}\label{eq: useful condition}
    \lambda_i=\mu_i-1,\quad {\lambda_{i+1}=\dots=\lambda_{j}=\mu_j+1},\quad \lambda_k=\mu_k\quad (k \neq i,j).
\end{align}    
We will divide the proof into two steps. We first show the case $(i,j)=(1,n)$ (Case I) and then the general case (Case II). 

\paragraph{\bf{Case I}} 
Assume first $i=1$ and $j=n$ ($n\geq2$). Then, we express $\lambda$ and $\mu$ as follows:
$$\lambda=[a,\underbrace{b,\dots,b}_{s},b],\qquad \mu=[a+1,\underbrace{b,\dots,b}_{s},b-1].$$
{Note that in the adjacent case, $a=b$ when $s\neq0$ but we shall not use this extra condition here.}
We take the good gradings associated with the following Young tableaux.
First, for the partition $\lambda$, we associate the left-aligned Young tableau so that the left-end box of each row has the same $x$-coordinate and put a number for each box in the unique way so that {\tiny $\numtableaux{i}$} sits in the right or below of {\tiny $\numtableaux{j}$} whenever $i<j$ holds.
For the partition $\mu$, we associate the Young tableau obtained from the one for $\lambda$ by shifting the boxes on the top by one on the left and dropping the left-end box to the bottom.
For example, for the adjacent partitions $\lambda=[3,3,3]$ and $\mu=[4,3,2]$ we have
\begin{equation}\label{eq: sliding the boxes adj}
\begin{tikzpicture}[every node/.style={draw,regular polygon sides=4,minimum size=1cm,line width=0.04em},scale=0.58, transform shape]
        \node at (3,0)  {1};
        \node at (2,0)  {4};
        \node[fill=gray!50] at (3,2)  {3};
        \node at (3,1)  {2};
        \node at (1,0)  {7};
        \node[fill=gray!50] at (2,2)  {6};
        \node at (2,1)  {5};
        \node[fill=gray!50] at (1,2) {9};
        \node at (1,1) {8};
    \end{tikzpicture},\hspace{2cm}
    \begin{tikzpicture}[every node/.style={draw,regular polygon sides=4,minimum size=1cm,line width=0.04em},scale=0.58, transform shape]
        \node at (3,0)  {1};
        \node at (2,0)  {4};
        \node[fill=gray!50] at (0,0)  {9};
        \node at (3,1)  {2};
        \node at (1,0)  {7};
        \node[fill=gray!50] at (2,2)  {3};
        \node at (2,1)  {5};
        \node[fill=gray!50] at (1,2) {6};
        \node at (1,1) {8};
    \end{tikzpicture}
\end{equation}
but we can also consider pairs of non-adjacent partitions such as $\lambda=[5,3,3,3]$ and $\mu=[6,3,3,2]$ for which we fix the Young tableaux.
\begin{equation}\label{eq: sliding the boxes}
\begin{tikzpicture}[every node/.style={draw,regular polygon sides=4,minimum size=1cm,line width=0.04em},scale=0.58, transform shape]
        \node at (3,0)  {3};
        \node at (2,0)  {7};
        \node[fill=gray!50] at (3,3)  {6};
        \node at (3,2)  {5};
        \node at (3,1)  {4};
        \node at (1,0)  {11};
        \node[fill=gray!50] at (2,3)  {10};
        \node at (2,2)  {9};
        \node at (2,1)  {8};
        \node at (4,0)  {2};
        \node[fill=gray!50] at (1,3) {14};
        \node at (1,2) {13};
        \node at (1,1) {12};
        \node at (5,0) {1};
    \end{tikzpicture},\hspace{2cm}
    \begin{tikzpicture}[every node/.style={draw,regular polygon sides=4,minimum size=1cm,line width=0.04em},scale=0.58, transform shape]
        \node at (3,0)  {3};
        \node at (2,0)  {7};
        \node[fill=gray!50] at (2,3)  {6};
        \node at (3,2)  {5};
        \node at (3,1)  {4};
        \node at (1,0)  {11};
        \node[fill=gray!50] at (1,3)  {10};
        \node at (2,2)  {9};
        \node at (2,1)  {8};
        \node at (4,0)  {2};
        \node[fill=gray!50] at (0,0) {14};
        \node at (1,2) {13};
        \node at (1,1) {12};
        \node at (5,0) {1};
    \end{tikzpicture}.
\end{equation}
In the above examples, the coloured boxes correspond to those shifted in the process of obtaining $\mu$ from $\lambda$. 

Next, we take the standard representative for the nilpotent element for $\lambda$ 
$$f_\lambda=\sum_{i\underset{\lambda}{\rightarrow} j}\E_{i,j}$$
as defined in \eqref{association of nilpotent orbit}.
Here, we use the notation $i\underset{\lambda}{\rightarrow} j$ to emphasize that the summation is done with respect to the Young tableau associated with the partition $\lambda$.
On the other hand, we take a non-standard representative $\widetilde{f}_\mu\in\OO_\mu$ obtained as a conjugation of the standard one   
\begin{align}\label{eq: non-standard nilp}
    \widetilde{f}_\mu= g\ast {f}_\mu,\qquad {f}_\mu=\sum_{i\underset{\mu}{\rightarrow} j}\E_{i,j}
\end{align}
where $g \in \SL_N$ is an element with $\Gamma_\mu$-grading zero which we will specify later.  

By Lemma \ref{lem: description of upper nilp}, we have
\begin{align*}
    \n_{0,1}
   &=\Span \{\E_{i,j}\mid i<j,\  x_\lambda(i)=x_\lambda(j),\ x_\mu(i)=x_\mu(j)+1 \}\\
      &=\Span \{\E_{\BF{r}+i+j\BF{s},\BF{r}+(j+1)\BF{s}}\mid  0<i< \BF{s},\ 0\leq j <b-1\}
\end{align*}
where $\BF{r}=a-b$ and $\BF{s}=s+2$. Then it follows from Lemma \ref{lem: description of f-inv} that 
\begin{align*}
    \n_{0,1}^{f_\lambda} =\Span \left\{\widehat{\E}_{i}\mid  0<i< \BF{s} \right\},\quad \widehat{\E}_{i}=\sum_{j=0}^{b-1} \E_{\BF{r}+i+j\BF{s},\BF{r}+(j+1)\BF{s}}
\end{align*}
which is indeed an abelian subalgebra. The character used in the quantum Hamiltonian reduction in \eqref{BRST differential_bis} is then given by
$$(f_\circ,\ \cdot\ )\colon \n_{0,1}^{f_\lambda} \rightarrow \C,\quad\text{with}\quad f_\circ= (\widehat{\E}_{1})^{\mathrm{op}}= \sum_{j=0}^{b-1} \E_{\BF{r}+(j+1)\BF{s},\BF{r}+1+j\BF{s}}.$$

We show that $\widetilde{f}_\mu:=f_\lambda+f_\circ$ lies in the orbit $\OO_2=\OO_\mu$ by constructing $g$ in \eqref{eq: non-standard nilp}.
Let decompose $\C^N$ and $f_\mu$ row-wise, \ie
$$\C^N=\C^{a+1} \oplus \underbrace{\C^b \oplus \dots \oplus \C^b}_{s} \oplus \C^{b-1},\quad f_\mu=\BF{f}_\mu^0 + (\BF{f}_\mu^1+\dots +\BF{f}_\mu^s)+\BF{f}_\mu^{s+1}, $$
{where $\BF{f}_\mu^k$ is given by the sum in \eqref{eq: non-standard nilp} restricted to boxes {\tiny $\numtableaux{i}$} and {\tiny $\numtableaux{j}$} on the $(k+1)$-th row.}
We have a similar decomposition for $f_\lambda=\BF{f}_\lambda^0 + (\BF{f}_\lambda^1+\cdots +\BF{f}_\lambda^s)+\BF{f}_\lambda^{s+1}$ and note that
$$\BF{f}_\mu^1+\cdots +\BF{f}_\mu^s= \BF{f}_\lambda^1+\cdots +\BF{f}_\lambda^s.$$
It suffices to construct $g$ inside $\SL(\C^{a+1}\oplus \C^{b-1})\subset \SL(\C^N)=\SL_N$ such that 
$$
g^{-1}\ast\left(\BF{f}_\lambda^0 +\BF{f}_\lambda^{s+1}+f_\circ\right)=\BF{f}_\mu^0+\BF{f}_\mu^{s+1}
$$
since
\begin{align*}
g^{-1}\ast\widetilde{f}_\mu
&=g^{-1}\ast\left(\BF{f}_\lambda^0 + (\BF{f}_\lambda^1+\cdots +\BF{f}_\lambda^s)+\BF{f}_\lambda^{s+1}+f_\circ\right) \\
&=g^{-1}\ast\left(\BF{f}_\lambda^0 +\BF{f}_\lambda^{s+1}+f_\circ\right)+(\BF{f}_\lambda^1+\cdots +\BF{f}_\lambda^s)\\
&=(\BF{f}_\mu^0+\BF{f}_\mu^{s+1})+(\BF{f}_\mu^1+\cdots +\BF{f}_\mu^s)
=f_\mu.
\end{align*}
This reduces to the proof to the case $s=0$ of ``height two" partitions
$$\lambda=(a,b),\qquad \mu=(a+1,b-1),$$
for which the element $g$ is constructed by (re-)labelling the boxes of Young tableaux \cite{FFFN, FKN24}: 
\begin{equation*}
\begin{tikzpicture}[every node/.style={draw,regular polygon sides=4,minimum size=1cm,line width=0.05em},scale=0.58, transform shape]
        \node at (1,1)  {$N$};
        \node at (1,0)  {$N\!\!-\!1$};
        \node at (2,1)  {$N\!\!-\!2$};
        \node at (2,0)  {$N\!\!-\!3$};
        \node at (3,1)  {$\cdots$};
        \node at (3,0)  {$\cdots$};
        \node at (4,1) {$\BF{r}$+2};
        \node at (4,0)  {$\BF{r}$+1};
        \node at (5,0)  {$\BF{r}$};
        \node at (6,0) {$\cdots$};
        \node at (7,0) {$1$};        
    \end{tikzpicture},\hspace{2cm}
\begin{tikzpicture}[every node/.style={draw,regular polygon sides=4,minimum size=1cm,line width=0.04em},scale=0.58, transform shape]
        \node at (1,0)  {$N$};
        \node at (2,0)  {$N\!\!-\!1$};
        \node at (2,1)  {$N\!\!-\!2$};
        \node at (3,0)  {$N\!\!-\!3$};
        \node at (3,1)  {$\cdots$};
        \node at (4,0)  {$\cdots$};
        \node at (4,1) {$\BF{r}$+2};
        \node at (5,0)  {$\BF{r}$+1};
        \node at (6,0)  {$\BF{r}$};
        \node at (7,0) {$\cdots$};
        \node at (8,0) {1};    
    \end{tikzpicture}.
\end{equation*}
One can take
\begin{align*}
    g=\diag{\frak{a} I_{\mathbf{r}+1},A_1,\dots,A_{a-1},a\frak{a} I_1},
    \quad \text{with}\quad
    A_i=\left(\begin{array}{cc}
       (a-i)\frak{b}&i\frak{a}  \\ -\frak{b}&\frak{a} 
    \end{array} \right),    
\end{align*}
where $I_p$ denotes the identity matrix of size $p$ and $\frak{a},\frak{b} \in \C^\times$ are constants satisfying $\frak{a}^{a+1}\frak{b}^{b-1}b^{b}=1$, \eg\ $(\frak{a},\frak{b})=(b^{-1/(a+1)},b^{-1})$.
Hence, the condition $(\bigstar)$ holds and the assertion follows as a special case of Theorem \ref{Main thm: PQHR}.

\paragraph{\bf{Case II}} 
Let $\lambda,\mu$ be general partitions satisfying the condition \eqref{eq: useful condition}. 
We take the good gradings associated with the Young tableaux as follows. For the partition $\lambda$, we take the Young tableau so that the first $i$ rows on the bottom are \emph{right}-aligned, \ie\ the right-end box of each row has the same $x$-coordinate.
Then, we \emph{left}-align the rows $i,\dots,j$, and finally, the rows $j,\dots, n$ are \emph{right}-aligned (see the left tableau on \eqref{eq: type A young tableau} corresponding to the partition $\lambda=(6,5,3,3,3,2)$ for an example).
The pyramid is then labelled uniquely so that $i$ sits to the right or below of $j$ whenever $i<j$ holds. 
The Young tableau corresponding to the partition $\mu$ is obtained from the previous one by shifting the boxes on the top of the pyramid, \ie\ the rows $j,\dots,n$, by one to the left and falling the right-end box of the $j$-th row down. We illustrate this process with the partition $\mu=(6,6,3,3,2,2)$ in \eqref{eq: type A young tableau} below, where the Young tableau on the right is obtained from the left one. In this example, $i=2$ and $j=5$.
\begin{align}\label{eq: type A young tableau}
\begin{tikzpicture}[every node/.style={draw,regular polygon sides=4,minimum size=1cm,line width=0.04em},scale=0.58, transform shape]
  \foreach \x in {0,...,5} \node[fill=black!80] at (\x,0) {};
  \foreach \x in {1,...,5} \node at (\x,1) {};
  \foreach \x in {1,...,3} \node at (\x,2) {};
  \foreach \x in {1,...,3} \node at (\x,3) {};
  \foreach \x in {1,...,3} \node[fill=gray!50] at (\x,4) {};
  \foreach \x in {2,3} \node[fill=black!80] at (\x,5) {};
\end{tikzpicture},\hspace{2cm}
\begin{tikzpicture}[every node/.style={draw,regular polygon sides=4,minimum size=1cm,line width=0.04em},scale=0.58, transform shape]
  \foreach \x in {0,...,5} \node[fill=black!80] at (\x,0) {};
  \foreach \x in {0} \node[fill=gray!50] at (\x,1) {};
  \foreach \x in {1,...,5} \node at (\x,1) {};
  \foreach \x in {1,...,3} \node at (\x,2) {};
  \foreach \x in {1,...,3} \node at (\x,3) {};
  \foreach \x in {1,2} \node[fill=gray!50] at (\x,4) {};
  \foreach \x in {1,2} \node[fill=black!80] at (\x,5) {};
\end{tikzpicture},
\end{align}
Note that in the example above, the partitions $\lambda$ and $\mu$ are again not adjacent. For an adjacent partition $\nu$ of $\lambda$, we should have started with a different alignment of rows the Young tableau of $\lambda$ and dropped the box from the row $j=5$ onto the row $i'=3$ as illustrated in the following:
\begin{align}\label{eq: type A young tableau adj}
\begin{tikzpicture}[every node/.style={draw,regular polygon sides=4,minimum size=1cm,line width=0.04em},scale=0.58, transform shape]
  \foreach \x in {0,...,5} \node[fill=black!80] at (\x,0) {};
  \foreach \x in {1,...,5} \node[fill=black!80] at (\x,1) {};
  \foreach \x in {3,...,5} \node at (\x,2) {};
  \foreach \x in {3,...,5} \node at (\x,3) {};
  \foreach \x in {3,...,5} \node[fill=gray!50] at (\x,4) {};
  \foreach \x in {4,5} \node[fill=black!80] at (\x,5) {};
\end{tikzpicture},\hspace{2cm}
\begin{tikzpicture}[every node/.style={draw,regular polygon sides=4,minimum size=1cm,line width=0.04em},scale=0.58, transform shape]
  \foreach \x in {0,...,5} \node[fill=black!80] at (\x,0) {};
  \foreach \x in {2} \node[fill=gray!50] at (\x,2) {};
  \foreach \x in {1,...,5} \node[fill=black!80] at (\x,1) {};
  \foreach \x in {3,...,5} \node at (\x,2) {};
  \foreach \x in {3,...,5} \node at (\x,3) {};
  \foreach \x in {3,4} \node[fill=gray!50] at (\x,4) {};
  \foreach \x in {3,4} \node[fill=black!80] at (\x,5) {};
\end{tikzpicture},
\end{align}

The embeddings of partitions
\begin{equation}\label{eq:embedding}
    \lambda^\circ=(\lambda_i,\dots,\lambda_j)\hookrightarrow \lambda,\qquad \mu^\circ=(\mu_i,\dots,\mu_j)\hookrightarrow \mu,
\end{equation}
straightforwardly induces the embedding of the corresponding Young tableaux. 
For example, Young tableaux
in \eqref{eq: sliding the boxes} and \eqref{eq: sliding the boxes adj} are obtained from those in \eqref{eq: type A young tableau} and \eqref{eq: type A young tableau adj} respectively by removing the dark boxes. 
By setting $M=\lambda_i+\cdots+\lambda_j$,  \eqref{eq:embedding} induce the embeddings
$$\tau_{\lambda^\circ}\colon \C^M \rightarrow \C^N,\qquad \tau_{\mu^\circ}\colon \C^M\hookrightarrow \C^N$$
as $\sll_2$-modules, extending the $\ad(f_\lambda)$-actions given by \eqref{eq: constracting the sl2-triple}. 
Abusing notation, let also denote by $\tau_\bullet\colon \End(\C^M) \hookrightarrow \End(\C^N)$ the corresponding embeddings by extending trivially.
By {\bf Case I}, we have the existence of $f_\circ=\widetilde{f}_{\mu^\circ}-f_{\lambda^\circ}$ and $g$ such that $g^{-1}\ast\widetilde{f}_{\mu^\circ}=f_{\mu^\circ}$.
Setting $\widetilde{f}_{\mu}=f_\lambda+\tau_{\lambda^\circ}(f_\circ)$, we find that $(\widetilde{f}_{\mu},\Gamma_\mu)$ is a good pair since $\tau_{\mu^\circ}(g^{-1}) \ast \widetilde{f}_{\mu}=f_\mu$ holds. Hence, the condition $(\bigstar)$ holds, and the assertion follows as a special case of Theorem \ref{Main thm: PQHR}.
\end{proof}
\printbibliography

\end{document}